\documentclass[11pt]{article}
\usepackage{amsfonts,amsbsy,amssymb,amsmath,graphicx}
\usepackage{geometry}
\usepackage[latin1]{inputenc}
\usepackage[T1]{fontenc} 
\usepackage{fancybox} 
\usepackage{alltt}
\usepackage{graphicx}
\usepackage[colorlinks,hyperindex,bookmarks,linkcolor=red,citecolor=blue,urlcolor=blue]{hyperref}
\usepackage{fancyhdr}
\usepackage{eso-pic, xcolor,graphicx}
\usepackage{tikz}
\usepackage{bbm}
\usepackage{natbib}

\newtheorem{theorem}{Theorem}[section]

\newtheorem{corollary}[theorem]{Corollary}

\newtheorem{definition}[theorem]{Definition}

\newtheorem{lemma}[theorem]{Lemma}

\newtheorem{proposition}[theorem]{Proposition}
\newtheorem{remark}[theorem]{Remark}

\usepackage{wrapfig}
\usepackage{epsf}

\setlength{\oddsidemargin}{0in}
\setlength{\evensidemargin}{0in}
\setlength{\topmargin}{-.5in}
\setlength{\textwidth}{6.5in}
\setlength{\textheight}{8.5in}

\newenvironment{proof} {\noindent { \textbf{Proof.}} } { \hfill \fbox{~} \\ } 
\def\1{1\kern-.20em {\rm l}}

\newcommand{\ER}{\ensuremath{\mathbb{R}}}

\def\S{{\mathcal{S}}}
\def\F{{\mathcal{F}}}
\def\G{{\mathcal{G}}}

\def\i{{\mathcal{I}}} 
 
\def\E{{\mathbb{E}}}

\def\P{{\mathbb{P}}}

\def\a{{\alpha}}
\begin{document}
\title {Kernel-smoothed conditional quantiles of randomly censored functional stationary ergodic data}


\author{{\sc Mohamed Chaouch}$^{1,}\footnote{corresponding author}$ \quad and \quad {\sc Salah Khardani}$^2$ \\
$^1$ Centre for the Mathematics of Human Behaviour\\Department of Mathematics and
Statistics\\University of Reading- UK\\
$^2$Universit\'e Lille Nord de France- France\\ 
email : $m.chaouch$@reading.ac.uk, $khardani$@lmpa.univ-littoral.fr}

\maketitle

\begin{abstract}

 This paper, investigates the conditional quantile estimation of a scalar random response and a functional random covariate (i.e. valued in some infinite-dimensional space) whenever {\it functional stationary ergodic data with random censorship} are considered. We introduce a kernel type estimator of the conditional quantile function. We establish the strong consistency with rate of this estimator as well as the asymptotic normality which induces a confidence interval that is usable in practice since it does not depend on any unknown quantity. An application to electricity peak demand interval prediction with censored smart meter data is carried out to show the performance of the proposed estimator.

\bigskip

\noindent {\bf Keywords:} Asymptotic normality, censored data, conditional quantiles, ergodic processes, functional data, interval prediction, martingale difference, peak load forecasting, strong consistency. 
\end{abstract}

\section{Introduction }

\noindent Functional data analysis is a branch of statistics that has been the object of
many studies and developments these last years. This kind of data appears
in many practical situations, as soon as one is interested in a continuous-time phenomenon
for instance. For this reason, the possible application fields propitious
for the use of functional data are very wide: climatology, economics, linguistics,
medicine, \dots. Since the  works of \cite{ramsay1991}, many developments have been investigated,
in order to build theory and methods around functional data, for instance
how it is possible to define the regression function and the quantile regression function of functional data,
what kind of model it is possible to consider with functional data.
\noindent The study of statistical models for infinite dimensional (functional) data has been the subject of several  works in the recent statistical literature. We refer to \cite{bosq2000}, \cite{ramsay2005} in the parametric model and the monograph by \cite{ferraty2006} for the nonparametric case. There are many results for nonparametric
models. For instance, \cite{ferraty2004} established the strong consistency
of kernel estimators of the regression function when the explanatory variable is functional
and the response is scalar, and their study is extended to non standard regression problems
such as time series prediction or curves discrimination by \cite{ferraty2002} and \cite{ferraty2003}. The
asymptotic normality result for the same estimator in the alpha-mixing case has been obtained
by \cite{masry}.

\noindent In addition to the regression function, other statistics such as quantile and mode regression could be with interest for both sides theory and practice. Quantile regression is a common way to describe the dependence structure between a response variable $Y$ and some covariate $X$. Unlike the regression function (which is defined as the conditional mean) that relies only on the central tendency of the data, conditional quantile function allows the analyst to estimate the functional dependence
between variables for all portions of the conditional distribution of the response
variable. Moreover, quantiles are well-known by their robustness to heavy-tailed error distributions and outliers which allows to consider them as a useful alternative to the regression function.

Conditional quantiles for scalar response and a scalar/multivariate covariate have received considerable interest in the statistical literature. For completely observed data, several nonparametric approaches have been proposed, for instance, \cite{gannoun2003} introduced a smoothed estimator based on double kernel and local constant kernel methods and \cite{berlinet} established its asymptotic normality.  Under random censoring, \cite{gannoun2005} introduced a local linear (LL) estimator of quantile regression (see \cite{koenker} for the definition) and \cite{elghouch} studied the same LL estimator.  \cite{OS} constructed a kernel estimator of the conditional quantile under independent and identically distributed (i.i.d.) censorship model and established its strong uniform convergence rate. \cite{liang} established the strong uniform convergence (with rate) of the conditional quantile function under $\alpha$-mixing assumption. 

Recently, many authors are interested in the estimation of conditional quantiles for a scalar response and functional covariate. \cite{ferraty2005} introduced a nonparametric estimator of conditional quantile defined as the inverse of the conditional cumulative distribution function when the sample is considered as an $\alpha$-mixing sequence. They stated its rate of almost complete consistency and used it to forecast the well-known El Ni\~no time series and to build confidence prediction bands. \cite{ezzahrioui} established the asymptotic normality of the kernel conditional quantile estimator under $\alpha$-mixing assumption. Recently, and within the same framework, \cite{daboniang} provided the consistency in $L^p$ norm of the conditional quantile estimator for functional dependent data.

In this paper we investigate the asymptotic properties of the conditional quantile function of a scalar response and functional covariate when data are randomly censored and assumed to be sampled from a stationary and ergodic process. Here, we consider a model in which the response variable is censored but not the covariate. Besides the infinite dimensional character of the data, we avoid here the widely used strong mixing condition and its variants to measure the dependency and the very involved probabilistic calculations that it implies. Moreover, the mixing properties of a number of well-known processes are still open questions. Indeed, several models are given in the literature where mixing properties are still to be verified or even fail to hold for the process they induce. Therefore, we consider in our setting the ergodic property to allow the maximum possible generality with regard to the dependence setting. Further motivations to consider ergodic data are discussed in \cite{laib2010} where details defining the ergodic property of processes are also given. As far as we know, the estimation of conditional quantile combining censored data, ergodic theory and functional data has not been studied in statistical literature. 

The rest of the paper is organized as follows. Section \ref{S2} introduces the kernel estimator of the conditional quantile under ergodic and random censorship assumptions. Section \ref{S3} formulates main results of strong consistency (with rate) and asymptotic normality of the estimator. An application to peak electricity demand interval prediction with censored smart meter data is given in Section \ref{S4}. Section \ref{S5} gives proofs of the main results. Some preliminary lemmas, which are used in the proofs of the main results, are collected in Appendix.


\section{Notations and definitions}\label{S2}
\subsection{Conditional quantiles under random censorship}
In the censoring case, instead of observing the lifetimes $T$ 
(which has a continuous distribution function (df) ) 
we observe the censored lifetimes of
items under study. That is, assuming that $(C_i)_{1\leq i \leq n}$ is a sequence of i.i.d.
censoring random variable (r.v.) with common unknown continuous df $G$.

Then in the right censorship model, we only observe the $n$ pairs $(Y_i, \delta_i)$ with
\begin{eqnarray}
Y_i = T_i \wedge C_i \quad \mbox{and} \quad \delta_i = \1_{\{T_i \leq C_i \}}, \quad 1\leq i \leq n,
\end{eqnarray}
where $\1_A$ denotes the indicator function of the set $A$. 

To follow the convention in biomedical studies and as indicated before, we
assume that $(C_i)_{1\leq i \leq n}$ and  $(T_i, X_i)_{1\leq i \leq n}$ are independent; this condition
is plausible whenever the censoring is independent of the modality of
the patients.


Let $(X, T)$ be $E\times\ER$-valued random elements, where $E$ is some semi-metric abstract space. Denote by $d(\cdot, \cdot)$ a semi-metric associated to the space $E$. Suppose now that we observe a sequence $(X_i, T_i)_{i\geq 1}$ of copies of $(X,T)$ that we assume to be {\it stationary} and {\it ergodic}.  For $x\in E$, we denote the conditional probability distribution of $T$ given $X=x$ by:
\begin{eqnarray}\label{defF}
\forall t\in\ER, \quad F(t\mid x) = \P\left( T\leq t \mid X=x\right).
\end{eqnarray}
We denote the conditional quantile, of order $\alpha\in (0,1)$, of $T$ given $X=x$, by
\begin{eqnarray}\label{defQ}
q_\alpha(x) = \inf\{ t : F(t\mid x) \geq \alpha \}.
\end{eqnarray}

%

We suppose that, for any fixed $x\in E$, $F(\cdot\mid x)$ be continuously differentiable real function, and admits a unique conditional quantile.

Let $\a \in (0,1)$, we will consider the problem of estimating the parameter $q_\a(x)$ which satisfies:
\begin{eqnarray}\label{eq1}
F(q_\a(x) \mid x) = \a.
\end{eqnarray}

\subsection{A nonparametric estimator of conditional quantiles}
It is clear that an estimator of $q_\a(x)$ can easily be deduced from an estimator of $F(t\mid x)$. Let us recall that in the case of {\it complete data}, a well-known kernel-estimator of the conditional distribution function is given by
\begin{eqnarray}\label{estF1}
F_n(t \mid x) =  \sum_{i=1}^n W_{n,i}(x) \; H(h_{n,H}^{-1}(t-T_i)),
\end{eqnarray}
where 
\begin{eqnarray}\label{NWW}
W_{n,i}(x) = \frac{K\left(h _{n,K}^{-1} d(x, X_i)\right) }{\sum_{i=1}^n K\left(h _{n,K}^{-1} d(x, X_i)\right)},
\end{eqnarray}
 are the well-known Nadaraya-Watson weights. Here $K$ is a real-valued kernel function, $H$ a cumulative distribution function and $h_K:= h_{n,K}$ (resp. $h_H:= h_{n,H}$) a sequence of positive real numbers which decreases to zero as $n$ tends to infinity. This estimator given by (\ref{estF1}) has been introduced in \cite{ferraty2006} in the general setting.
 
An appropriate estimator of the conditional distribution function $F(t\mid x)$ for censored data is then obtained by adapting (\ref{NWW}) in order to put more emphasis on large values of the interest random variable $T$ which are more censored than small one. Based on the same idea as in \cite{carbonez} and \cite{khardani}, we consider the following weights
\begin{eqnarray}\label{NWWC}
\widetilde{W}_{n,i}(x) = \frac{1}{h_K} K\left(h _K^{-1} d(x, X_i)\right) \frac{\delta_i}{\overline{G}(Y_i) \sum_{i=1}^nh_K^{-1} K\left(h _K^{-1} d(x, X_i)\right)},
\end{eqnarray}
where $\overline{G}(\cdot) = 1- G(\cdot)$.
Now, we consider a {\it "pseudo-estimator''} of $F(t\mid x)$ given by:
\begin{eqnarray}\label{pseudo}
\widetilde{F}_n(t\mid x) = \frac{\sum_{i=1}^n \delta_i \bar{G}^{-1}(Y_i) K\left( h_K^{-1} d(x,X_i)\right)\; H(h_{H}^{-1}(t-Y_i))}{\sum_{i=1}^n K\left( h_K^{-1} d(x,X_i)\right)} := \frac{\widetilde{F}_{n}(x,t)}{\ell_n(x)},
\end{eqnarray}

where 
\begin{eqnarray*}
\widetilde{F}_{n}(x,t) = \frac{1}{n\E(\Delta_1(x))} \sum_{i=1}^n \delta_i \bar{G}^{-1}(Y_i) \; H\left(h_H^{-1}(t-Y_i)\right) \Delta_i(x),
\end{eqnarray*}
and
$$
\ell_n(x) = \frac{1}{n\E(\Delta_1(x))} \sum_{i=1}^n \Delta_i(x),
$$
where $\Delta_i(x) = K\left(d(x,X_i)/h_K \right)$.
In practice $G$ is unknown, we use the \cite{kaplan} estimator of $G$ given by:

$$\overline{G}_n(t) = \left\lbrace 
\begin{array}{lcl} 
\prod_{i=1}^n \left( 1-\frac{1-\delta_{(i)}}{n-i+1}\right)^{\1_{\left\{ Y_{(i)} \leq t \right\}}} & & \mbox{if}\quad t<Y_{(n)},\\ 
0 & & \mbox{Otherwise},\\ 
\end{array}\right.$$ 
where $Y_{(1)} < Y_{(2)} < \dots < Y_{(n)}$ are the order statistics of $(Y_i)_{1\leq i \leq n}$ and $\delta_{(i)}$ is the concomitant of $Y_{(i)}$. Therefore, the estimator of $F(t\mid x)$ is given by:
\begin{eqnarray}\label{estF2}
\widehat{F}_n(t\mid x) = \frac{\widehat{F}_{n}(x,t)}{\ell_n(x)},
\end{eqnarray}
where
$$
\widehat{F}_{n}(x,t) = \frac{1}{n\E(\Delta_1(x))} \sum_{i=1}^n \delta_i \bar{G}_n^{-1}(Y_i) \; H(h_{H}^{-1}(t-Y_i)) \Delta_i(x).
$$
Then a natural estimator of $q_\a(x)$ is given by:
\begin{eqnarray}\label{estQ}
\widehat{q}_{n,\a}(x) = \inf\{ y : \widehat{F}_n(y\mid x) \geq \a \},
\end{eqnarray}
which satisfies:
\begin{eqnarray}\label{verif}
\widehat{F}_n(\widehat{q}_{n,\a}(x) \mid x) = \a.
\end{eqnarray}

\section{Assumptions and main results}\label{S3}
In order to state our results, we introduce some notations. Let $\F_i$ be the $\sigma$-field generated by $\left( (X_1, T_1), \dots, (X_i, T_i) \right)$ and $\mathcal{G}_i$ the one generated by $\left( (X_1, T_1), \dots, (X_i, T_i), X_{i+1} \right).$ Let $B(x,u)$ be the ball centered at $x\in E$ with radius $u$. Let $D_i(x) := d(x,X_i)$ so that $D_i(x)$ is a nonnegative real-valued random variable. Working on the probability space $\left(\Omega, \mathcal{A}, \P \right)$, let $F_x(u) = \P\left(D_i(x) \leq u \right) := \P\left(X_i \in B(x,u)\right)$ and $F_x^{\F_{i-1}}(u) = \P\left( D_i(x) \leq u\mid \F_{i-1}\right) = \P\left(X_i\in B(x,u)\mid \F_{i-1} \right)$ be the distribution function and the conditional distribution function, given the $\sigma$-field $\F_{i-1}$, of $(D_i(x))_{i\geq 1}$ respectively. Denote by $o_{a.s.}(u)$ a real random function $\ell$ such that $\ell(u)/u$ converges to zero almost surely as $u\rightarrow 0.$ Similarly, define $O_{a.s.}(u)$ a real random function $\ell$ such that $\ell(u)/u$ is almost surely bounded. Furthermore, for any distribution function $L$, let $\tau_L = \sup\{ t, \mbox{such that}\; L(t) <1 \}$ be the support's right endpoint. Let $S$ be a compact set such that $q_\a(x)\in S \cup (-\infty, \tau]$, where $\tau < \tau_G \wedge \tau_F.$ 
\subsection{Rate of strong consistency}

Our results are stated under some assumptions we gather hereafter for easy reference.

\begin{itemize}
\item[(A1)] $K$ is a nonnegative bounded kernel of class $\mathcal{C}^1$ over its support $[0,1]$ such that $K(1) >0$. The derivative $K^\prime$ exists on $[0,1]$ and satisfy the condition $K^\prime(t) <0,$ for all $t\in [0,1]$ and $|\int_0^1 (K^j)^\prime(t) dt| < \infty$ for $j=1,2.$ 
\item[(A2)] For $x\in E$, there exists a sequence of nonnegative random functionals $(f_{i,1})_{i\geq 1}$ almost surely bounded by a sequence of deterministic quantities $(b_i(x))_{i\geq 1}$ accordingly, a sequence of random functions $(g_{i,x})_{i\geq 1}$, a deterministic nonnegative bounded functional $f_1$ and a nonnegative real function $\phi$ tending to zero, as its argument tends to 0, such that 
\begin{itemize}
\item[(i)] $F_x(h) = \phi(h)f_1(x) + o(\phi(h))$ as $h\rightarrow 0.$
\item[(ii)] For any $i\in \mathbb{N}$, $F_x^{\mathcal{F}_{i-1}}(h) = \phi(h) f_{i,1}(x) + g_{i,x}(h)$ with $g_{i,x}(h) = o_{a.s.}(\phi(h))$ as $h\rightarrow 0,$ $g_{i,x}(h)/\phi(h)$ almost surely bounded and $n^{-1}\sum_{i=1}^ng_{i,x}^j(h)=o_{a.s.}(\phi^j(h))$ as $n\rightarrow \infty$, $j=1,2.$
\item[(iii)] $n^{-1}\sum_{i=1}^nf_{i,1}^j(x) \rightarrow f_1^j(x),$ almost surely as $n\rightarrow\infty$, for $j=1,2.$
\item[(iv)] There exists a nondecreasing bounded function $\tau_0$ such that, uniformly in $s\in[0,1]$,
$\frac{\phi(hs)}{\phi(h)}=\tau_0(s) +o(1)$, as $h \downarrow 0$ and, for $j \geq 1$, $\int_0^1 (K^j(t))' \tau_0(t) dt < \infty.$
\item[(v)] $n^{-1}\sum_{i=1}^n b_i(x) \rightarrow D(x) <\infty$ as $n\rightarrow\infty.$
\end{itemize}
\item[(A3)] The conditional distribution function $F(t\mid x)$ has a positive first derivative with respect to $t$, for all $x\in E$, denoted $f(t\mid x)$ and satisfies 
\end{itemize}
\begin{itemize}
\item[(ii)] $\int_{\mathbb{R}}|t| f(t\mid x) dt <\infty$, for all $x\in E$,
\item[(ii)] For any $x\in E$, there exist $V(x)$ a neighborhood of $x$, some constants $C_x>0$, $\beta >0$ and $\nu >0$, such that for $j=0,1$, we have $\forall (t_1,t_2) \in S\times S$, $\forall (x_1, x_2)\in V(x)\times V(x)$, $$\Big|F^{(j)}(t_1\mid x_1) - F^{(j)}(t_2 \mid x_2)\Big|\leq C_x \left( d(x_1,x_2)^\beta + |t_1 - t_2|^\nu\right).$$
\end{itemize}
\begin{itemize}
\item[(A4)] For any $m\geq 1$ and $j=0,1$, $\E\left[\left(H^{(j)}(h_{H}^{-1}(t-T_i))\right)^m \mid \G_{i-1}\right] = \E\left[\left(H^{(j)}(h_{H}^{-1}(t-T_i))\right)^m \mid X_i\right] $ 
\item[(A5)] The distribution function $H$ has a first derivative $H^{(1)}$ which is positive and bounded and satisfies $\int |u|^\nu H^{(1)}(u) du < \infty.$
\item[(A6)] For any $x^\prime\in E$ and $m\geq 2$, $\sup_{t\in\S}|g_{m}(x^\prime,t)| := \sup_{t\in\S}|\E[H^m(h_{H}^{-1}(t-T_1)) \mid X_1=x^\prime]| < \infty$ and $g_{m}(x^\prime,t)$ is continuous in $V(x)$ uniformly in $t$:
$$
\sup_{t\in\S}\sup_{x^\prime\in B(x,h)}|g_{m}(x^\prime,t) - g_{m}(x,t)| = o(1).
$$
\item[(A7)]$(C_n)_{n \geq 1}$and $(T_n,X_n)_{n \geq 1}$ are  independent.

\end{itemize}
\noindent {\it Comments on hypothesis}: Conditions (A1) involves the ergodic nature of the data and the small ball techniques used in this paper. Several examples where condition (A1)(ii) is satisfied are discussed in \cite{laib2011}. Assumption (A3)(ii) involves the conditional probability and conditional probability density, it means that $F(\cdot\mid \cdot)$ and $f(\cdot\mid \cdot)$ are continuous with respect to each variable. Assumption (A4) is of Markov's nature. Hypothesis (A1) and (A5) impose some regularity conditions upon the kernels used in our estimates. Condition (A6) stands as regularity condition that is of usual nature. 
 The independence assumption between $(C_i)_i$ and $(X_i,T_i)_i$, given by (A7), may seem to
be strong and one can think of replacing it by a classical conditional independence
assumption between $(C_i)_i$ and $(T_i)_i$ given $(X_i)_i$. However considering the latter
demands an a priori work of deriving the rate of convergence
of the censoring variable's conditional law (see \cite{deheuvels}). Moreover
our framework is classical and was considered by \cite{carbonez} and
\cite{kohler}, among others.

\begin{proposition}\label{convFhat}
Assume that conditions (A1)-(A7) hold true and that
\begin{eqnarray}\label{cond}
n\phi(h_K) \rightarrow \infty \quad \mbox{and}\quad \frac{\log n}{n\phi(h_K)}\rightarrow 0 \quad \mbox{as}\quad n\rightarrow\infty.
\end{eqnarray}
Then, we have
$$
\sup_{t \in S} \Bigl\lvert \widehat{F}_n(t \mid x) - F(t \mid x) \Bigl\rvert = O_{a.s.}\left(h_K^\beta + h_H^\nu \right) + O_{a.s.}\left( \sqrt{\frac{\log n}{n\phi(h_K)}}\right).
$$
\end{proposition}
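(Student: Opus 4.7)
The plan is to split the estimation error into a censoring correction part and a pseudo-estimation part, then analyze each separately. Writing
\begin{eqnarray*}
\widehat{F}_n(t\mid x) - F(t\mid x) = \bigl[\widehat{F}_n(t\mid x) - \widetilde{F}_n(t\mid x)\bigr] + \bigl[\widetilde{F}_n(t\mid x) - F(t\mid x)\bigr],
\end{eqnarray*}
I first handle the difference involving $\bar{G}_n - \bar{G}$. Since $t$ varies in $S \subset (-\infty,\tau]$ with $\tau < \tau_G \wedge \tau_F$, the classical uniform strong consistency of the Kaplan--Meier estimator on $[0,\tau]$ gives $\sup_{y\le\tau}|\bar{G}_n(y)-\bar{G}(y)| = O_{a.s.}(\sqrt{\log\log n / n})$, and factoring $\bar{G}^{-1}-\bar{G}_n^{-1} = (\bar{G}_n - \bar{G})/(\bar{G}\bar{G}_n)$, which is bounded away from $0$ on $[0,\tau]$, yields a contribution negligible against $\sqrt{\log n / (n\phi(h_K))}$.

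For the pseudo-estimator, I rewrite
\begin{eqnarray*}
\widetilde{F}_n(t\mid x) - F(t\mid x) = \frac{1}{\ell_n(x)}\Bigl[\widetilde{F}_n(x,t) - F(t\mid x)\,\ell_n(x)\Bigr],
\end{eqnarray*}
and I show separately that (i) $\ell_n(x) \to 1$ almost surely at a suitable rate under (A2), using the Doob-type decomposition $\Delta_i(x)/\E\Delta_1(x) = M_i + \E[\Delta_i(x)\mid\F_{i-1}]/\E\Delta_1(x)$ together with (A2)(ii)--(iii), and (ii) the numerator can be controlled uniformly in $t\in S$. Setting $\Lambda_i(x,t) = \delta_i\bar{G}^{-1}(Y_i)H(h_H^{-1}(t-Y_i))\Delta_i(x)$, I use the Markov-type hypothesis (A4) combined with the censoring independence (A7) to obtain
\begin{eqnarray*}
\E[\Lambda_i(x,t)\mid \G_{i-1}] = \Delta_i(x)\,\E[H(h_H^{-1}(t-T_i))\mid X_i],
\end{eqnarray*}
which, after an integration by parts using (A5) and the Hölder condition (A3)(ii), equals $\Delta_i(x)F(t\mid x) + O_{a.s.}(h_K^\beta + h_H^\nu)\,\Delta_i(x)$ uniformly in $t\in S$ (recall $\Delta_i(x)$ vanishes outside $B(x,h_K)$). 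Summing over $i$ and dividing by $n\E\Delta_1(x)$, the $\G_{i-1}$-conditional mean part reproduces $F(t\mid x)\ell_n(x)$ up to the bias $O_{a.s.}(h_K^\beta + h_H^\nu)$.

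The remaining centered sum
\begin{eqnarray*}
M_n(x,t) = \frac{1}{n\E\Delta_1(x)}\sum_{i=1}^n \Bigl[\Lambda_i(x,t) - \E(\Lambda_i(x,t)\mid \G_{i-1})\Bigr]
\end{eqnarray*}
is a martingale difference sum. For fixed $t$, I bound its conditional variance using (A2)(iv), (A6) and the boundedness of $H$ and $\bar{G}^{-1}$ on $[0,\tau]$, yielding a variance of order $(n\phi(h_K))^{-1}$, and I then invoke an exponential inequality for martingale differences (such as the one established for this ergodic framework in \cite{laib2010}) to get the pointwise rate $O_{a.s.}(\sqrt{\log n / (n\phi(h_K))})$. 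To pass from pointwise to uniform in $t\in S$, I cover $S$ by $N_n = O(n^{a})$ balls of radius $\eta_n = n^{-a/\nu}$ and use the Hölder exponent $\nu$ from (A3)(ii) (together with (A5)) to show that the oscillation of $M_n(x,\cdot)$ between neighbouring grid points is of smaller order; a union bound over the $N_n$ grid points, with $a$ chosen sufficiently large, then delivers the claimed uniform rate.

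The step I expect to be most delicate is establishing the uniform control of $M_n(x,t)$: combining the martingale exponential inequality with the covering argument while ensuring that the oscillation bound between grid points only introduces a negligible term requires careful tuning of the grid size and verification that the variance bound is uniform in $t\in S$. The bias calculation and the Kaplan--Meier correction, by contrast, reduce to standard manipulations once the regularity assumptions are invoked.
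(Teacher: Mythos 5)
Your overall architecture coincides with the paper's: the same split $\widehat{F}_n-\widetilde{F}_n$ plus $\widetilde{F}_n-F$, the Kaplan--Meier LIL bound $O_{a.s.}(\sqrt{\log\log n/n})$ for the first piece (the paper's Lemma on $\widehat F_n-\widetilde F_n$, via Cai--Roussas), a bias of order $O_{a.s.}(h_K^\beta+h_H^\nu)$ obtained from (A3)(ii), (A5) and the identity $\1_{\{T_i\le C_i\}}\varphi(Y_i)=\1_{\{T_i\le C_i\}}\varphi(T_i)$, the limit $\ell_n(x)\to 1$, and an exponential inequality for martingale differences for the stochastic term; your explicit covering argument in $t$ is in fact more careful than the paper, which asserts the supremum after a pointwise Borel--Cantelli step.

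There is, however, one concrete gap in the martingale step. You center at $\G_{i-1}$, i.e.\ you work with $Z_i=\Lambda_i(x,t)-\E[\Lambda_i(x,t)\mid\G_{i-1}]$, and then propose to ``invoke an exponential inequality for martingale differences such as the one in the ergodic framework of Laib--Louani.'' That inequality (the paper's Lemma on martingale differences) requires the conditional moments given the \emph{past} $\sigma$-field to be dominated by \emph{deterministic} quantities $d_i^2$, and the rate comes from $D_n=\sum_i d_i^2=O(n\phi(h_K))$. With your centering, the past contains $X_i$, so $\E[|Z_i|^p\mid\G_{i-1}]$ is of order $\Delta_i^p(x)$ --- a random quantity whose only deterministic bound is a constant. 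Taking $d_i^2$ constant gives $D_n\asymp n$, and for the threshold $\epsilon\asymp n\phi(h_K)\sqrt{\log n/(n\phi(h_K))}$ the exponent $\epsilon^2/D_n\asymp\phi(h_K)\log n\to 0$, so the claimed rate $O_{a.s.}(\sqrt{\log n/(n\phi(h_K))})$ does not follow from that inequality; your statement that ``the variance is of order $(n\phi(h_K))^{-1}$'' is a bound on the \emph{predictable} (random) variation, which this inequality does not accept. The paper avoids this precisely by centering at $\F_{i-1}$ (its terms $Q_n$, $R_n$, $B_n$ and $\overline{\ell}_n$): then $\E[\Delta_i^p(x)\mid\F_{i-1}]\asymp\phi(h_K)f_{i,1}(x)\le\phi(h_K)b_i(x)$ by (A2), so $d_i^2=\phi(h_K)(Mb_i(x)+1)$ is deterministic and $D_n=O_{a.s.}(n\phi(h_K))$ by (A2)(v). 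To repair your version, either re-center at $\F_{i-1}$ (reducing to the paper's argument), or use a Freedman/Bernstein-type martingale inequality conditioned on the predictable quadratic variation together with a separate almost-sure bound on $(n\phi(h_K))^{-1}\sum_i\Delta_i^2(x)$; as written, the step would fail. A secondary, minor point: the oscillation of the stochastic term between grid points is governed by the Lipschitz constant $\|H^{(1)}\|_\infty/h_H$, not by the H\"older exponent of $F$, so your grid radius must be chosen small relative to $h_H$ (harmless if $h_H$ decays polynomially, but it should be said).
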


\begin{theorem}\label{theoremQ}
Under the same assumptions of Proposition \ref{convFhat}, we have
\begin{eqnarray}
\Big| \widehat{q}_{n,\a}(x)-  q_\a(x) \Big| = O_{a.s.}\left(h_K^\beta + h_H^\nu \right) + O_{a.s.}\left( \sqrt{\frac{\log n}{n\phi(h_K)}}\right).
\end{eqnarray}
\end{theorem}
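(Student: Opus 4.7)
The plan is to reduce the conditional quantile convergence to the uniform convergence of $\widehat{F}_n(\cdot\mid x)$ already proved in Proposition~\ref{convFhat}, via a first-order expansion of $F(\cdot\mid x)$ around $q_\alpha(x)$. The two identities I will pivot on are $F(q_\alpha(x)\mid x)=\alpha$ from \eqref{eq1} and $\widehat{F}_n(\widehat{q}_{n,\alpha}(x)\mid x)=\alpha$ from \eqref{verif}, which together give
$$
F(\widehat{q}_{n,\alpha}(x)\mid x)-F(q_\alpha(x)\mid x) \;=\; F(\widehat{q}_{n,\alpha}(x)\mid x)-\widehat{F}_n(\widehat{q}_{n,\alpha}(x)\mid x).
$$

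First I would establish the almost sure consistency $\widehat{q}_{n,\alpha}(x)\to q_\alpha(x)$. Since $f(q_\alpha(x)\mid x)>0$ by (A3), $F(\cdot\mid x)$ is strictly increasing on a neighborhood of $q_\alpha(x)$, so for any small $\varepsilon>0$ one has $F(q_\alpha(x)-\varepsilon\mid x)<\alpha<F(q_\alpha(x)+\varepsilon\mid x)$. Proposition~\ref{convFhat} transfers these strict inequalities to $\widehat{F}_n$ on an event of probability one for $n$ large, which by the definition \eqref{estQ} of the empirical quantile forces $\widehat{q}_{n,\alpha}(x)\in (q_\alpha(x)-\varepsilon,q_\alpha(x)+\varepsilon)$ eventually. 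In particular $\widehat{q}_{n,\alpha}(x)\in S$ almost surely for $n$ large enough, so Proposition~\ref{convFhat} applies at the random point $\widehat{q}_{n,\alpha}(x)$.

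Next, I apply the mean value theorem to the left-hand side of the displayed identity: there exists $\widetilde{q}_n$ between $q_\alpha(x)$ and $\widehat{q}_{n,\alpha}(x)$ with
$$
f(\widetilde{q}_n\mid x)\bigl(\widehat{q}_{n,\alpha}(x)-q_\alpha(x)\bigr) \;=\; F(\widehat{q}_{n,\alpha}(x)\mid x)-\widehat{F}_n(\widehat{q}_{n,\alpha}(x)\mid x).
$$
The right-hand side is bounded in absolute value by $\sup_{t\in S}|\widehat{F}_n(t\mid x)-F(t\mid x)|$, which is $O_{a.s.}(h_K^\beta+h_H^\nu)+O_{a.s.}(\sqrt{\log n/(n\phi(h_K))})$ by Proposition~\ref{convFhat}. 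By the consistency step and the local Hölder regularity of $f(\cdot\mid x)=F^{(1)}(\cdot\mid x)$ supplied by (A3)(ii) (applied with $j=1$, $x_1=x_2=x$), $f(\widetilde{q}_n\mid x)\to f(q_\alpha(x)\mid x)>0$ almost surely, and so $f(\widetilde{q}_n\mid x)$ is eventually bounded below by $\tfrac12 f(q_\alpha(x)\mid x)>0$. Dividing yields the claimed rate.

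The only genuine obstacle is the bookkeeping around the random localization of $\widehat{q}_{n,\alpha}(x)$: one must confirm that with probability one the empirical quantile lies inside the compact $S$ (so that the uniform rate of Proposition~\ref{convFhat} kicks in at $\widehat{q}_{n,\alpha}(x)$) and that the intermediate point $\widetilde{q}_n$ stays in a neighborhood on which $f(\cdot\mid x)$ is bounded away from zero. Both points follow from the consistency argument above together with the continuity of $f(\cdot\mid x)$ at $q_\alpha(x)$, so no new ideas beyond Proposition~\ref{convFhat} and (A3) are required.
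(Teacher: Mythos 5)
Your proposal is correct and follows essentially the same route as the paper: almost sure consistency of $\widehat{q}_{n,\a}(x)$ via the identity $F(q_\a(x)\mid x)=\a=\widehat{F}_n(\widehat{q}_{n,\a}(x)\mid x)$ and Proposition \ref{convFhat}, then a mean value expansion of $F(\cdot\mid x)$ at $q_\a(x)$ with the derivative at the intermediate point bounded away from zero by (A3). Your extra care about the localization of $\widehat{q}_{n,\a}(x)$ in $S$ is a welcome but minor refinement of the paper's argument.
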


\subsection{Asymptotic normality}
The aim of this section is to establish the asymptotic normality which induces a confidence interval of the conditional quantiles estimator. For that purpose we need to introduce further notations and assumptions. We assume, for $k=1,2$, that $\E\left( |\delta_1 \bar{G}^{-1}(Y_1)H(h_H^{-1}(t - Y_1))|^k\right) < \infty$ and that, for a fixed $x \in E$, the conditional variance, of $\delta_1 \bar{G}^{-1}(Y_1)H(h_H^{-1}(t - Y_1))$ given $X_1=x$, say, \\ $W_2(t\mid x) := \E\left[ \left(\delta_1 \bar{G}^{-1}(Y_1)H(h_H^{-1}(t - Y_1)) - F(t\mid x) \right)^2\mid X_1=x\right]$  exists.
\begin{itemize}
\item[(A8)] 
\begin{itemize}
\item[(i)] The conditional variance of $\delta_i \bar{G}^{-1}(Y_i) H(h_H^{-1}(t - Y_i))$ given the $\sigma$-field $\G_{i-1}$ depends only on $X_i$, i.e., for any $i\geq 1$, $\E\left[\left( \delta_i \bar{G}^{-1}(Y_i) H(h_H^{-1}(t - Y_i)) - F(t\mid X_i)\right)^2 \mid \G_{i-1}\right] = W_2(t\mid X_i)$ almost surely. 
\item[(ii)] For some $\delta>0$, $\E[|\delta_1 \bar{G}^{-1}(Y_1)H(h_H^{-1}(t - Y_1))|^{2+\delta}]<\infty$ and the function \\$\overline{W}_{2+\delta}(t\mid u):=\E(|\delta_1 \bar{G}^{-1}(Y_i)H(h_H^{-1}(t - Y_i)) - F(t\mid x)|^{2+\delta}\mid X_i=u)$, $u\in E$, is continuous in a neighborhood of $x.$\\
\end{itemize}
\item[(A9)] The distribution function of the censored random variable, $G$ has bounded first derivative $G^{(1)}.$
\end{itemize}

\begin{theorem}\label{normF}
Assume that assumptions (A1)-(A9) hold true and condition (\ref{cond}) is satisfied, then we have
\begin{eqnarray*}
\sqrt{n\phi(h_K)} \left(\widehat{F}_n(t\mid x)-  F(t\mid x)  \right)  \stackrel{\mathcal{D}}{\longrightarrow} \mathcal{N}\left( 0, \sigma^2(x,t)\right),
\end{eqnarray*}
where $ \stackrel{\mathcal{D}}{\longrightarrow}$ denotes the convergence in distribution and 
\begin{eqnarray*}\sigma^2(x,t)= \frac{M_2}{M_1^2}\frac{F(t \mid x)\left(\bar{G}^{-1}(t) - F(t \mid x) \right)}{f_1(x)},
\end{eqnarray*}
where $M_j = K^j(1) - \int_0^1 (K^j)^\prime \tau_0(u) du.$
\end{theorem}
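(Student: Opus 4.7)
The plan is to reduce the estimator to a sum of martingale differences plus asymptotically negligible remainders, and then apply a central limit theorem for triangular arrays of martingale differences (e.g.\ Hall–Heyde).

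\textbf{Step 1: replace $\bar G_n$ by $\bar G$.} First I would write
\[
\widehat{F}_n(t\mid x) - F(t\mid x) = \bigl[\widehat{F}_n(t\mid x) - \widetilde{F}_n(t\mid x)\bigr] + \bigl[\widetilde{F}_n(t\mid x) - F(t\mid x)\bigr],
\]
where $\widetilde{F}_n(t\mid x) := \widetilde{F}_n(x,t)/\ell_n(x)$ is the pseudo-estimator based on the true $\bar G$. Using (A7) and the standard a.s.\ rate of the Kaplan--Meier estimator $\sup_{s\le\tau}|\bar G_n(s)-\bar G(s)|=O((\log\log n/n)^{1/2})$ together with (A9), the first bracket is $O_{a.s.}((\log\log n /n)^{1/2})$, which vanishes after multiplication by $\sqrt{n\phi(h_K)}$ since $\phi(h_K)\to 0$. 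Hence it suffices to prove the CLT for $\widetilde{F}_n(t\mid x)$.

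\textbf{Step 2: linearise.} Write
\[
\widetilde{F}_n(t\mid x) - F(t\mid x) = \frac{1}{\ell_n(x)}\bigl[\widetilde{F}_n(x,t) - F(t\mid x)\,\ell_n(x)\bigr],
\]
and recall that the ergodic law of large numbers (Laib–Louani type) combined with (A2) yields $\ell_n(x)\to 1$ a.s. It remains to establish asymptotic normality of
\[
R_n := \widetilde{F}_n(x,t) - F(t\mid x)\,\ell_n(x) = \frac{1}{n\,\E\Delta_1(x)}\sum_{i=1}^n \Delta_i(x)\bigl[\delta_i\bar G^{-1}(Y_i)H(h_H^{-1}(t-Y_i)) - F(t\mid x)\bigr].
\]

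\textbf{Step 3: martingale decomposition.} Setting $Z_{n,i} := \Delta_i(x)[\delta_i\bar G^{-1}(Y_i)H(h_H^{-1}(t-Y_i)) - F(t\mid x)]$, split
\[
Z_{n,i} = \eta_{n,i} + \mu_{n,i}, \qquad \eta_{n,i} := Z_{n,i} - \E[Z_{n,i}\mid \mathcal{G}_{i-1}],\quad \mu_{n,i} := \E[Z_{n,i}\mid \mathcal{G}_{i-1}].
\]
By construction $(\eta_{n,i},\mathcal{G}_i)$ is a martingale difference array. Using (A4), (A7) and the change of variables $u=(t-s)/h_H$, the conditional mean reduces to $\mu_{n,i}=\Delta_i(x)[F(t\mid X_i)-F(t\mid x)]+O(h_H^\nu)\Delta_i(x)$, so (A3)(ii), together with the fact that $\Delta_i(x)$ is supported on $\{X_i\in B(x,h_K)\}$, gives the deterministic bound $|\mu_{n,i}|=O(h_K^\beta+h_H^\nu)\Delta_i(x)$. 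Combined with (A2)(i) this implies that the $\mu_{n,i}$-contribution to $R_n$ is $O(h_K^\beta+h_H^\nu)$ a.s., which is negligible at rate $(n\phi(h_K))^{-1/2}$ under (\ref{cond}) and the bandwidth conditions inherited from Proposition \ref{convFhat}.

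\textbf{Step 4: martingale CLT for the principal term.} I will apply the martingale CLT to $T_n := \sum_i \eta_{n,i}/(n\,\E\Delta_1(x))$ after scaling by $\sqrt{n\phi(h_K)}$. The two ingredients to check are:

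\emph{(i) Conditional variance.} Compute
\[
V_n := \sum_{i=1}^n \E[\eta_{n,i}^2\mid \mathcal{G}_{i-1}] = \sum_{i=1}^n \Delta_i^2(x)\,W_2(t\mid X_i) + \text{lower-order terms},
\]
using (A8)(i) and the fact that $\E[Z_{n,i}\mid\mathcal{G}_{i-1}]^2$ is of smaller order. Then, via the small-ball decomposition (A2)(ii)--(iv), the continuity of $W_2$ from (A8)(ii) and the ergodic averaging (A2)(iii), one obtains, after integration by parts against $K^2$,
\[
\frac{\phi(h_K)}{n\,[\E\Delta_1(x)]^2}\,V_n \;\xrightarrow{a.s.}\; \frac{M_2}{M_1^2}\,\frac{W_2(t\mid x)}{f_1(x)} \;=\; \sigma^2(x,t),
\]
where the final equality uses (A7) to evaluate $W_2(t\mid x)=F(t\mid x)(\bar G^{-1}(t)-F(t\mid x))$ in the limit $h_H\to 0$.

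\emph{(ii) Lindeberg.} I will verify a Lyapunov condition of order $2+\delta$: by (A8)(ii) and (A2), $\sum_i \E|\eta_{n,i}|^{2+\delta}$ is of order $n\phi(h_K)\cdot[\phi(h_K)]^{1+\delta/2}$ in appropriate normalisation, giving a ratio of order $[\phi(h_K)]^{\delta/2}\to 0$.

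Combining these with Slutsky (via $\ell_n(x)\to 1$) yields the stated convergence in distribution.

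\textbf{Main obstacle.} The delicate point is the accurate identification of the limiting conditional variance $V_n$: the ergodic averaging requires carefully separating the $\mathcal{G}_{i-1}$-measurable factor $\Delta_i^2(x)$ (handled by (A2)) from the censored weight $W_2(t\mid X_i)$ (handled by (A8)), and then evaluating $W_2(t\mid x)$ as $h_H\downarrow 0$ via (A7). The Kaplan–Meier replacement in Step 1 also requires the bandwidth restriction $n\phi(h_K)\to\infty$ to guarantee that the classical $n^{-1/2}$ rate of $\bar G_n-\bar G$ dominates the nonparametric rate $(n\phi(h_K))^{-1/2}$.
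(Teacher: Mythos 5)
Your proposal is correct in outline and follows essentially the same route as the paper: split off the Kaplan--Meier replacement term (negligible via the LIL for $\bar G_n$), show the conditional-bias term is $O_{a.s.}(h_K^\beta+h_H^\nu)$ using (A3)--(A5), (A7), and apply the Hall--Heyde martingale CLT to the centred sum, identifying the variance through (A8) and checking Lindeberg via a $(2+\delta)$-moment (Lyapunov) bound, exactly as in the paper's Lemma \ref{normalityQ}. The only difference is cosmetic: you centre at $\mathbb{E}[\,\cdot\mid\mathcal{G}_{i-1}]$, so $\Delta_i^2(x)$ appears raw in the conditional variance and needs an extra (standard, Laib--Louani-type) law-of-large-numbers step to be replaced by $\mathbb{E}[\Delta_i^2(x)\mid\mathcal{F}_{i-1}]$ before (A2)(iii) applies, whereas the paper centres at $\mathbb{E}[\,\cdot\mid\mathcal{F}_{i-1}]$ and averages directly.
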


\begin{theorem}\label{normality}
Under the same assumptions and conditions of Theorem \ref{normF}, we have
\begin{eqnarray*}
\sqrt{n\phi(h_K)} \left(\widehat{q}_{n,\a}(x)-  q_\a(x)  \right)  \stackrel{\mathcal{D}}{\longrightarrow} \mathcal{N}\left( 0, \gamma^2(x, q_\a(x))\right),\end{eqnarray*}
\begin{eqnarray*}
\gamma^2(x,q_\a(x)) = \frac{\sigma^2(x, q_\a(x))}{f^2(q_\a(x)\mid x))} = \frac{M_2}{M_1^2 f_1(x)}\frac{\a \left[ \bar{G}^{-1}(q_{\a}(x)) - \a \right]}{f^2(q_\a(x)\mid x))}.
\end{eqnarray*}
\end{theorem}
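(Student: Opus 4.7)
The approach is a Bahadur-type linearisation that transfers the distributional limit for $\widehat{F}_n$ from Theorem \ref{normF} to $\widehat{q}_{n,\alpha}(x)$. Starting from the defining identity $\widehat{F}_n(\widehat{q}_{n,\alpha}(x)\mid x) = \alpha = F(q_\alpha(x)\mid x)$, I would apply the mean value theorem to the map $t \mapsto \widehat{F}_n(t\mid x)$, which is differentiable in $t$ by (A5) with derivative equal to the kernel-type conditional density estimator $\widehat{f}_n(\cdot\mid x)$ obtained by replacing $H$ with $h_H^{-1}H^{(1)}$. This yields
\[ \widehat{F}_n(q_\alpha(x)\mid x) - F(q_\alpha(x)\mid x) = -\widehat{f}_n(\eta_n\mid x)\bigl(\widehat{q}_{n,\alpha}(x) - q_\alpha(x)\bigr) \]
for some random $\eta_n$ lying between $q_\alpha(x)$ and $\widehat{q}_{n,\alpha}(x)$.

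Theorem \ref{theoremQ} provides $\widehat{q}_{n,\alpha}(x) \to q_\alpha(x)$ almost surely, so $\eta_n \to q_\alpha(x)$. The next step is to establish $\widehat{f}_n(\eta_n \mid x) \to f(q_\alpha(x)\mid x)$ in probability; this follows from the pointwise consistency of $\widehat{f}_n$ on a neighbourhood of $q_\alpha(x)$, which can be derived by repeating the ergodic/martingale arguments underlying Proposition \ref{convFhat} with $H^{(1)}$ in place of $H$, combined with the continuity of $f(\cdot\mid x)$ provided by (A3)(ii). Since $f(q_\alpha(x)\mid x) > 0$, dividing the previous display by $\widehat{f}_n(\eta_n\mid x)$ and multiplying by $\sqrt{n\phi(h_K)}$ reduces the problem to the asymptotic behaviour of $\sqrt{n\phi(h_K)}\bigl[\widehat{F}_n(q_\alpha(x)\mid x) - F(q_\alpha(x)\mid x)\bigr]$.

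Theorem \ref{normF} evaluated at $t = q_\alpha(x)$ states exactly that this quantity converges in distribution to $\mathcal{N}(0, \sigma^2(x, q_\alpha(x)))$. Slutsky's lemma then delivers the announced limit $\mathcal{N}\bigl(0, \sigma^2(x, q_\alpha(x))/f^2(q_\alpha(x)\mid x)\bigr)$; the closed form for $\gamma^2$ follows by substituting $F(q_\alpha(x)\mid x) = \alpha$ from (\ref{eq1}) inside the expression of $\sigma^2$, which turns $F(q_\alpha(x)\mid x)\bigl(\bar{G}^{-1}(q_\alpha(x)) - F(q_\alpha(x)\mid x)\bigr)$ into $\alpha\bigl(\bar{G}^{-1}(q_\alpha(x)) - \alpha\bigr)$.

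I expect the delicate step to be the consistency of the auxiliary density estimator $\widehat{f}_n$ at the random point $\eta_n$, since none of the paper's stated results bears directly on $\widehat{f}_n$. A viable alternative is to apply the mean value theorem to the deterministic function $F(\cdot\mid x)$ instead and then to control the Bahadur remainder $R_n = [\widehat{F}_n-F](\widehat{q}_{n,\alpha}(x)\mid x) - [\widehat{F}_n-F](q_\alpha(x)\mid x)$ by a stochastic equicontinuity argument on a shrinking neighbourhood of $q_\alpha(x)$, leveraging (A3)(ii), (A5) and Proposition \ref{convFhat}. Establishing such equicontinuity in the ergodic functional framework is typically the most demanding part of the argument.
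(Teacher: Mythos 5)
Your argument is essentially the paper's own proof: the authors also expand $\widehat{F}_n(\cdot\mid x)$ around $q_\alpha(x)$ via the mean value theorem with derivative $\hat{f}_n(q^\star_{n,\alpha}(x)\mid x)$, use the almost sure consistency of $\widehat{q}_{n,\alpha}(x)$, and conclude with Theorem \ref{normF} at $t=q_\alpha(x)$ plus Slutsky and the substitution $F(q_\alpha(x)\mid x)=\alpha$. The step you flag as delicate is exactly what the paper covers with its Proposition \ref{density} (uniform consistency of $\hat{f}_n(\cdot\mid x)$ on $S$, proved by repeating the arguments of Proposition \ref{convFhat} with $H^{(1)}$ in place of $H$), so your plan matches theirs.
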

Observe that in Theorem \ref{normality} the limiting variance, for a given $x\in E$, $\gamma^2(\cdot\mid x)$ contains the unknown function $f_1(\cdot)$, the normalization depends on the function $\phi(\cdot)$ which is not identifiable explicitly and the theoretical conditional quantile $q_\a(x)$. Moreover, we have to estimate the quantities $f(\cdot\mid x)$, $\tau_0(\cdot)$ and $G(\cdot)$. The corollary below allows us to get a suitable form of Central Limit Theorem which can be used in practice to estimate interval prediction.
First of all, we give an estimator of each unknown quantity in Theorem \ref{normality}. To estimate, for a fixed $x\in E$, $\gamma(\cdot\mid x)$ the quantities $\bar{G}(\cdot)$, $F(\cdot\mid x)$ and $q_\a(x)$ should be replaced by their estimators $\bar{G}_n(\cdot)$, $\widehat{F}_n(\cdot\mid x)$ and $\widehat{q}_{n,\a}(x)$ respectively. 

Now, using the decomposition given by assumption (A2)(i), one can estimate $\tau_0(u)$ by $\tau_n(u) = F_{x,n}(uh)/F_{x,n}(h)$, where $F_{x,n}(u) = 1/n \sum_{i=1}^n \1_{\{ d(x,X_i)\leq u\}}.$
Therefore, for a given kernel $K$, an estimators of $M_1$ and $M_2$, namely $M_{1,n}$ and $M_{2,n}$ respectively, are obtained by plug-in $\tau_n$, in place of $\tau_0$, in their respective expressions.
\subsection{Application to predictive interval}
\begin{corollary}\label{corr}
Assume that conditions (A1)-(A9) hold true, $K^\prime$ and $(K^2)^\prime$ are integrable functions and 
\begin{eqnarray*}
\frac{M_{1,n} \;\hat{f}_n(\widehat{q}_{\a,n}(x)\mid x)}{\sqrt{M_{2,n}}} \sqrt{\frac{n F_{x,n}(h_K)}{\a \left( \bar{G}^{-1}_n(\widehat{q}_{\a,n}(x)) - \a\right)}} \; \left(\widehat{q}_{n,\a}( x) - q_{\a}(x)\right) \stackrel{\mathcal{D}}{\longrightarrow} \mathcal{N}( 0, 1),
\end{eqnarray*}
where $\hat{f}_n( \cdot \mid x)$ is an estimator of the conditional density function $f(\cdot\mid x).$
\end{corollary}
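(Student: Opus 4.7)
The plan is to reduce the corollary to Theorem \ref{normality} via Slutsky's theorem. Write the studentized statistic as
\begin{eqnarray*}
T_n \;=\; \Bigl[\sqrt{n\phi(h_K)}\,(\widehat{q}_{n,\a}(x) - q_\a(x))\Bigr]\cdot R_n,
\end{eqnarray*}
where $R_n$ collects all the ratios between estimated and population quantities, namely
\begin{eqnarray*}
R_n \;=\; \frac{M_{1,n}\,\widehat{f}_n(\widehat{q}_{\a,n}(x)\mid x)}{\sqrt{M_{2,n}}}\,
\sqrt{\frac{F_{x,n}(h_K)/\phi(h_K)}{\a\bigl(\bar{G}_n^{-1}(\widehat{q}_{\a,n}(x)) - \a\bigr)}}.
\end{eqnarray*}
By Theorem \ref{normality} the first bracketed factor converges in distribution to $\N\bigl(0,\gamma^2(x,q_\a(x))\bigr)$. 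Since $1/\gamma(x,q_\a(x)) = \frac{M_1 f(q_\a(x)\mid x)}{\sqrt{M_2}}\sqrt{f_1(x)/[\a(\bar G^{-1}(q_\a(x))-\a)]}$, it suffices to prove $R_n \to 1/\gamma(x,q_\a(x))$ in probability and then apply Slutsky.

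By the continuous mapping theorem, this in turn reduces to establishing four separate in-probability convergences: (a) $\bar G_n(\widehat q_{\a,n}(x)) \to \bar G(q_\a(x))$; (b) $\widehat f_n(\widehat q_{\a,n}(x)\mid x) \to f(q_\a(x)\mid x)$; (c) $F_{x,n}(h_K)/\phi(h_K) \to f_1(x)$; and (d) $M_{j,n} \to M_j$ for $j=1,2$. For (a), I would combine uniform strong consistency of the Kaplan--Meier estimator on compact subsets of $[0,\tau_G)$ with $\widehat q_{\a,n}(x) \to q_\a(x)$ from Theorem \ref{theoremQ} and the continuity of $G$. For (b), since the corollary presupposes that $\widehat f_n(\cdot\mid x)$ is a consistent estimator of $f(\cdot\mid x)$, convergence at the estimated point follows from Theorem \ref{theoremQ} and the continuity assumption (A3)(ii).

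For (c), assumption (A2)(i) yields $\E[F_{x,n}(h_K)] = F_x(h_K) = \phi(h_K)f_1(x) + o(\phi(h_K))$, and the ergodic theorem applied to the bounded indicator variables $\1\{d(x,X_i)\le h_K\}$, together with the bandwidth condition $n\phi(h_K)\to\infty$ from (\ref{cond}), yields $F_{x,n}(h_K)-F_x(h_K)=o_P(\phi(h_K))$; dividing by $\phi(h_K)$ gives (c). For (d), rewrite $M_{j,n} = K^j(1) - \int_0^1 (K^j)'(u)\tau_n(u)\,du$ with $\tau_n(u) = F_{x,n}(uh_K)/F_{x,n}(h_K)$. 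Assumption (A2)(iv) gives $F_x(uh_K)/F_x(h_K)\to \tau_0(u)$ uniformly in $u\in[0,1]$, and the same ergodic argument used for (c) shows $\tau_n(u)\to\tau_0(u)$ in probability, uniformly on compact subsets of $(0,1]$. Using the integrability of $(K^j)'$ assumed in the corollary, together with the boundedness of $\tau_0$ in (A2)(iv) to dominate, dominated convergence gives $M_{j,n}\to M_j$.

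The main obstacle is item (d): controlling $\tau_n$ simultaneously in numerator and denominator as $h_K\downarrow 0$ requires a uniform (in $u\in[0,1]$) handle on the ergodic fluctuations $F_{x,n}(uh_K)-F_x(uh_K)$ on shrinking balls, and an integrable dominant for $(K^j)'(u)\tau_n(u)$. Once (a)--(d) are in hand, assembling them inside $R_n$ and invoking Slutsky's theorem together with Theorem \ref{normality} completes the proof.
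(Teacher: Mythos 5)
Your proposal is correct and follows essentially the same route as the paper: factor the studentized statistic into the quantity normalized by population constants (which is $\mathcal{N}(0,1)$ by Theorem \ref{normality}) times a ratio of estimated to true quantities, show that ratio tends to one in probability using consistency of $\widehat{q}_{n,\a}$, $\hat f_n$, $\bar G_n$, $F_{x,n}(h_K)/\phi(h_K)$ and $M_{j,n}$, and conclude by Slutsky. The only difference is that the items you labour over in (c) and (d) (and flag as the main obstacle) are not proved in the paper at all but simply quoted from Laib and Louani (2010), with the Kaplan--Meier consistency in (a) taken from Deheuvels and Einmahl (2000).
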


The corollary \ref{corr} can be now used to provide the $100(1-\a)\%$ confidence bands for $q_\a(x)$ which is given, for $x\in E$, by
\begin{eqnarray*}
\widehat{q}_{n,\a}(x) \pm c_{\a/2} \frac{M_{1,n} \;\hat{f}_n(\widehat{q}_{\a,n}(x)\mid x)}{\sqrt{M_{2,n}}} \sqrt{\frac{\a \left( \bar{G}^{-1}_n(\widehat{q}_{\a,n}(x)) - \a\right)}{n F_{x,n}(h_K)}}.
\end{eqnarray*}
where $c_{\a/2}$ is the upper $\a/2$ quantile of the distribution of ${\cal N}(0,1).$
In the following section we give an application of this corollary for interval prediction of the daily electricity peak demand under random censorship.
 
\section{Interval prediction of peak load with censored smart meter data}\label{S4}
The evolution of peak electricity demand can be considered an important system design metric for grid operators and planners. In fact, overall electricity demand and the daily schedules of electricity usage are currently major influences on peak load. In the future, peak load will be influenced by new or increased demands such as the penetration of clean energy technologies (wind and photovoltaic generation), such as electric vehicles and the increased use of electricity for heating and cooling through technologies such as heat pumps. Smart grid technology, through the use of advanced monitoring and control equipment, could reduce peak demand and thus prolong and optimize use of the existing infrastructure. Regularly, the electricity network constraints are evaluated on a specific area in order to prevent over-voltage problems on the grid. Very localized peak demand forecasting is needed to detect voltage constraints on each node and each feeder in the Low-Voltage network.
\begin{figure}
\begin{center}
\includegraphics[height=9cm,width=15cm]{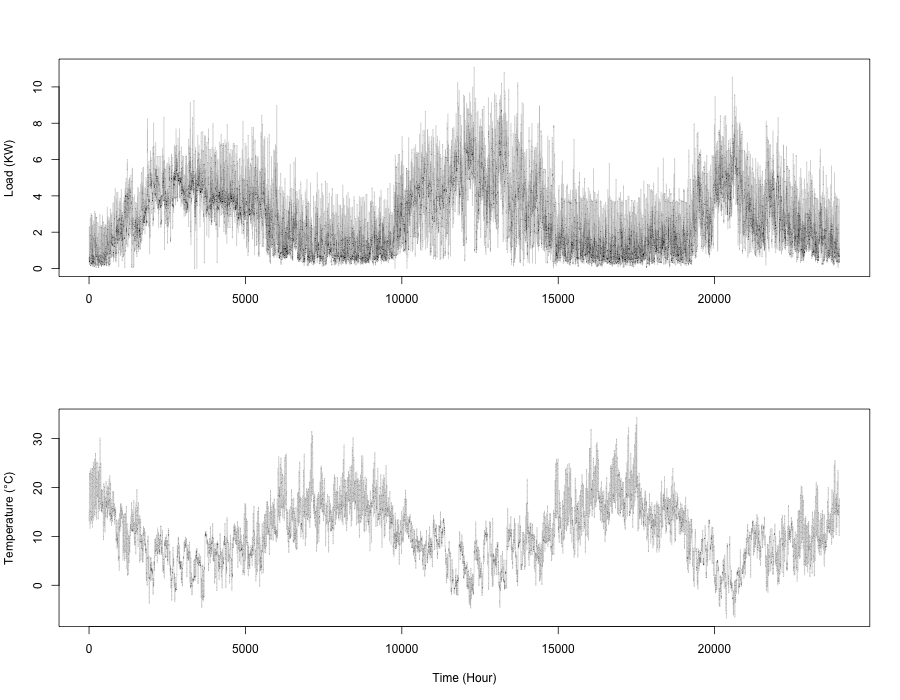}
\end{center}
\caption{Hourly measurements for electricity demand and temperature for 1000 days.}
\label{load}
\end{figure}

Peak demand forecasting of aggregated electricity demand has been widely studied in statistical literature and several approaches have been proposed to solve this issue, see for instance, \cite{sigauke} and \cite{goia} for short-term peak forecasting and \cite{hyndman} for long-term density peak forecasting. The arrival of  Automated Meter Reading (AMR) allows us to receive energy demand measurement at a finite number of equidistant time points, e.g. every half-hour or every ten minutes. As far as we know, nothing has been done for household-level peak demand forecasting. In this section we are interested in the estimation of interval prediction of peak demand at the customer level. For a fixed day $d$, let us denote by $(\mathcal{L}_d(t_j))_{j=1, \dots, 24}$ the hourly measurements sent by the AMR of some specific customer. The peak demand observed for the day $d$ is defined as
\begin{eqnarray*}
\mathcal{P}_d = \max_{j=1,\dots, 24} \mathcal{L}_d(t_j).
\end{eqnarray*}
The transmission of the consumed energy from the AMR to the system information might be made, for instance, by wireless technology. Unfortunately, in practice we cannot receive on time the hole measurements for every day. In fact many sources of such kind of censorship could arise, for instance an interruption in the wireless communication during the day or a physical problem with the AMR. Whenever we receive the hole data one can determine the peak for that day, otherwise (when data are censored) we cannot calculate the true peak. In such case one can delete that observation from the sample which leads to reduce the available information. In this paper we suggest to keep censored observations in our sample and use them to predict peak demand intervals.

It is well-known that peak demand is very correlated with temperature measurments. Figure \ref{load} shows the hourly measurements of electricity demand  and temperature during 1000 days. One can easily observe a seasonality in the load curve which reflects the sensitivity of energy consumption, for that customer, to weather conditions. Figure \ref{peak} provides a sample of 10 curves of houly temperature measures and the associated electricity demand curves. Observed peak, for each day, is plotted in solid circles. 
\begin{figure}
\begin{center}
\includegraphics[height=6.5cm,width=15cm]{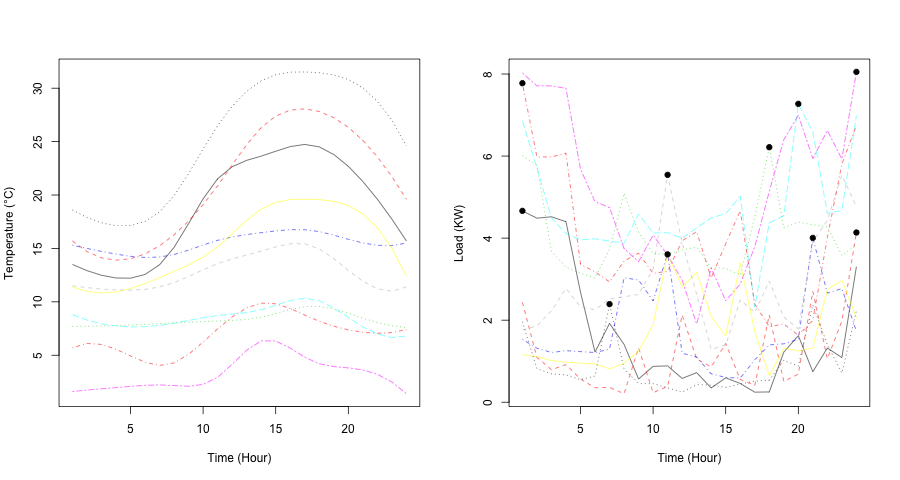}
\end{center}
\caption{A sample of 10 daily temperature curves and the associated electricity demand curves. Observed daily peaks are in solid circle.}
\label{peak}
\end{figure}
We split our sample of 1000 days into learning sample containing the first 970 days and a testing sample with the last 30 days. From the learning sample we selected 30\% of days within which we generated randomly the censorship. Figure \ref{samplecens} provides a sample of 6 censored daily load curves. For those days, the AMR send hourly electricity consumption until a certain time $t_c\in [1,24]$ which corresponds to the time of censorship which is plotted in dashed line in Figure \ref{samplecens}. For a censored day, we define the censored random variable 
\begin{eqnarray*}
C_d = \max_{j=1, \dots t_c} \mathcal{L}_d(t_j),
\end{eqnarray*}
where $t_c$ is the time from which we don't receive data from the smart meter.
\begin{figure}
\begin{center}
\includegraphics[height=10cm,width=15cm]{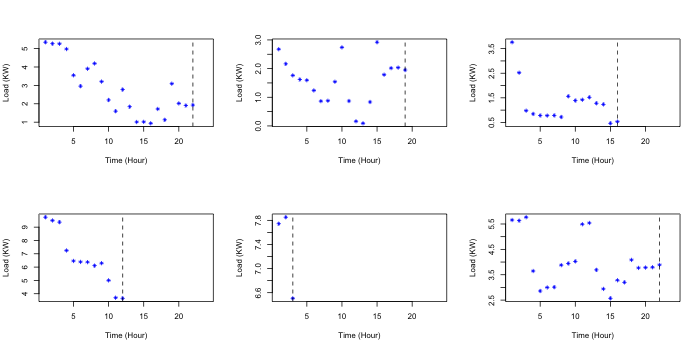}
\end{center}
\caption{A sample of 6 censored daily load curves. Observed values of electricity consumption are plotted in star points, dashed line corresponds to the time of censorship for each day.}
\label{samplecens}
\end{figure}
Therefore, our sample is formed as follow $(X_d, Y_d)_{d=1, \dots, 970}$, where $X_d$ is the predicted temperature curve for the day $d$ and $Y_d = \mathcal{P}_d$ for completely observed days and $Y_d = C_d$ for censored ones. Here, we investigate, for each day $d=971, \dots, 1000$, the conditional quantile functions of $Y_d$ given the predicted temperature curve $X_d$. The $5\%$ and $95\%$ quantiles consists of the $90\%$ confidence intervals of the last 30 peak load in the testing sample, say $[q_{0.05}(X_d), q_{0.95}(X_d)]$ for $d=971, \dots, 1000.$ Note that these confidence intervals are derived directly from the conditional quantile functions given by (\ref{estQ}). To estimate conditional quantiles we chose the quadratic kernel defined by $K(u) = 1.5(1-u^2)\1_{[0,1]}$. Because the daily temperature curves are very smooth, we chosed as semi-metric $d(\cdot, \cdot)$ the $L_2$ distance between the sec ond derivative of the curves. Finally, we considered the optimal bandwidth $h:= h_K=h_H$ chosen by the cross-validation method on the $k$-nearest neighbors (see \cite{ferraty2006}, p.102 for more details). Figure \ref{intervals} provides our results for the peak load interval prediction for the testing sample. The true peaks are plotted in solid triangles. Solid circles represent the conditional median values. On can easily observe that the conditional median is a consistent predictor of the peak. In fact, let us define the Mean Absolute Prediction Error as
\begin{eqnarray*}
\mbox{MAPE} = \frac{1}{30}\sum_{d=1}^{30} \frac{|\mathcal{P}_d - \hat{q}_{0.5}(X_d)|}{\mathcal{P}_d},
\end{eqnarray*}
where $\mathcal{P}_d$ is the true value of the peak for the day $d$ and $\hat{q}_{0.5}(X_d)$ its predicted value based on the conditional median. We obtain here $\mbox{MAPE}=0.24.$ Observe that we over-estimate the peak of the 16th day.
\begin{figure}
\begin{center}
\includegraphics[height=7.5cm,width=14cm]{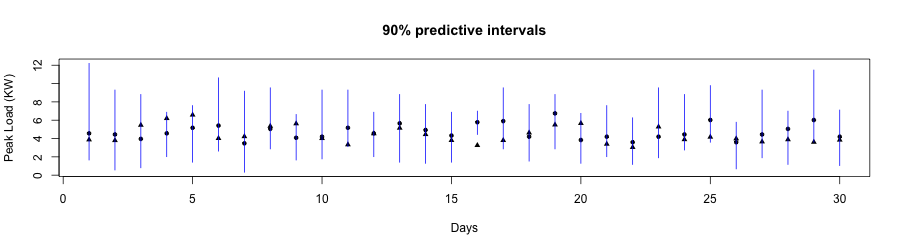}
\end{center}
\caption{$90\%$ predictive intervals of the peak demand for the last 30 days.}
\label{intervals}
\end{figure}

\section*{Acknowledgment}

The first author would like to thank Scottish and Southern Power Distribution SSEPD for support and funding via the New Thames Valley Vision Project (SSET203 - New Thames Valley Vision) funded through the Low Carbon Network Fund.

\section{Proofs of main results}\label{S5}
In order to proof our results, we introduce some further notations. Let 
$$
\overline{\widetilde{F}}_n(x,t) = \frac{1}{n\E(\Delta_1(x))} \sum_{i=1}^n \E\left[ \delta_i \bar{G}^{-1}(Y_i) \; H(h_{H}^{-1}(t-Y_i)) \; \Delta_i(x) \mid \F_{i-1}\right]
$$
and
$$
\overline{\ell}_n(x) = \frac{1}{n\E(\Delta_1(x))} \sum_{i=1}^n \E\left[ \Delta_i(x) \mid \F_{i-1}\right].
$$

Now, lets introduce the decomposition hereafter. For $x\in E$, set

\begin{eqnarray}\label{decomp}
 \widehat{F}_n(t \mid x) - F(t \mid x) =  \widehat{F}_n(t\mid x) - \widetilde{F}_n(t\mid x) +  \widetilde{F}_n(t\mid x) -F(t\mid x).
\end{eqnarray}
To get the proof of Proposition \ref{convFhat}, we establish the following Lemmas.

\begin{definition}
A sequence of random variables $(Z_n)_{n\geq1}$ is said to be a sequence of martingale differences with respect to the sequence of $\sigma$-fields $(\mathcal{F}_n)_{n\geq1}$ whenever $Z_n$ is $\mathcal{F}_n$ measurable and $\mathbb{E}\left(Z_n\mid\mathcal{F}_{n-1} \right)=0$ almost surely.
\end{definition}
In this paper we need an exponential inequality for partial sums of unbounded martingale differences that we use to derive asymptotic results for the Nadaraya-Watson-type multivariate quantile regression function estimate built upon functional ergodic data. This inequality is given in the following lemma.
\begin{lemma}\label{fact1}
Let $(Z_n)_{n\geq1}$ be a sequence of real martingale differences with respect to the sequence of $\sigma$-fields $(\mathcal{F}_n=\sigma(Z_1,\dots,Z_n))_{n\geq1}$, where $\sigma(Z_1, \dots, Z_n)$ is the $\sigma$-filed generated by the random variables $Z_1, \dots, Z_n$. Set $S_n=\sum_{i=1}^n Z_i.$ For any $p\geq2$ and any $n\geq1$, assume that there exist some nonnegative constants $C$ and $d_n$ such that
\begin{eqnarray}
\mathbb{E}\left( Z_n^p\mid\mathcal{F}_{n-1}\right) \leq C^{p-2}p!d_n^2\quad\quad\mbox{almost surely.}
\end{eqnarray}
Then, for any $\epsilon>0$, we have 
$$
\mathbb{P}\left(|S_n|>\epsilon \right)\leq 2\exp\left\{-\frac{\epsilon^2}{2(D_n+C\epsilon)} \right\},
$$
where $D_n=\sum_{i=1}^n d_i^2.$
\end{lemma}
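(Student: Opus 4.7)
The plan is to establish this classical Bernstein-type inequality for martingale differences via the standard exponential-moment (Chernoff) argument, handling the heavy-tail moment condition with a geometric-series trick rather than a bounded-difference truncation. The key point is that the stated moment growth $\mathbb{E}(Z_n^p\mid\mathcal{F}_{n-1})\le C^{p-2} p!\, d_n^2$ is exactly calibrated so that the Taylor expansion of $\exp(\lambda Z_n)$ telescopes into a geometric series in $\lambda C$.

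First, for a tuning parameter $\lambda\in(0,1/C)$, I would apply Markov's inequality to $e^{\lambda S_n}$ to obtain $\mathbb{P}(S_n>\epsilon)\le e^{-\lambda\epsilon}\mathbb{E}[e^{\lambda S_n}]$. To bound the one-step conditional MGF, I expand
\begin{equation*}
\mathbb{E}\bigl[e^{\lambda Z_i}\mid\mathcal{F}_{i-1}\bigr]
= 1 + \lambda\,\mathbb{E}[Z_i\mid\mathcal{F}_{i-1}] + \sum_{p\ge 2}\frac{\lambda^p}{p!}\mathbb{E}\bigl[Z_i^p\mid\mathcal{F}_{i-1}\bigr],
\end{equation*}
use the martingale-difference property to kill the linear term, and apply the hypothesis to each $p$th moment. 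The factorials cancel, leaving
\begin{equation*}
\mathbb{E}\bigl[e^{\lambda Z_i}\mid\mathcal{F}_{i-1}\bigr]
\le 1 + \lambda^2 d_i^2 \sum_{p\ge 2}(\lambda C)^{p-2}
= 1 + \frac{\lambda^2 d_i^2}{1-\lambda C}
\le \exp\!\left(\frac{\lambda^2 d_i^2}{1-\lambda C}\right),
\end{equation*}
where the last step is $1+x\le e^x$.

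Next, I iterate via the tower property: writing $e^{\lambda S_n}=e^{\lambda S_{n-1}}\,e^{\lambda Z_n}$ and conditioning on $\mathcal{F}_{n-1}$, the above bound pulls out as a deterministic factor, and induction on $n$ gives
\begin{equation*}
\mathbb{E}\bigl[e^{\lambda S_n}\bigr]\le \exp\!\left(\frac{\lambda^2 D_n}{1-\lambda C}\right).
\end{equation*}
Combining with the Chernoff step,
\begin{equation*}
\mathbb{P}(S_n>\epsilon)\le \exp\!\left(-\lambda\epsilon + \frac{\lambda^2 D_n}{1-\lambda C}\right),
\end{equation*}
and I would then optimise in $\lambda$. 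The canonical choice $\lambda=\epsilon/(D_n+C\epsilon)$ (which automatically satisfies $\lambda C<1$) produces the exponent $-\epsilon^2/\bigl(2(D_n+C\epsilon)\bigr)$ after a short algebraic simplification.

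Finally, to obtain the two-sided statement $\mathbb{P}(|S_n|>\epsilon)\le 2\exp\{\cdots\}$, I would apply the same argument to the sequence $(-Z_n)_{n\ge 1}$, which is again a martingale difference sequence with respect to $(\mathcal{F}_n)$ and whose conditional moments satisfy the same bound (interpreted via $|Z_n|^p$), and add the two tail estimates. The only genuine subtlety is the passage from the per-step bound to the global bound while keeping $\lambda$ uniform in $i$: one must verify that the chosen $\lambda$ lies in $(0,1/C)$ for every $\epsilon>0$, which is immediate from the formula $\lambda=\epsilon/(D_n+C\epsilon)<1/C$. The optimisation step and the verification that the geometric series converges under the stated regime are the only places where one must be careful; everything else is a standard application of conditional expectations and Taylor expansion.
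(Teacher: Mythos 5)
The paper does not actually prove this lemma: it is quoted as a particular case of Theorem 8.2.2 of de la Pe\~na and Gin\'e (1999), following Laib and Louani (2011). Your attempt to give a self-contained Chernoff--Bernstein proof is the right general strategy (it is essentially how the cited theorem is proved), but as written it contains a genuine gap at the decisive step. From your one-step bound you obtain $\mathbb{E}\bigl[e^{\lambda S_n}\bigr]\le \exp\bigl(\lambda^2 D_n/(1-\lambda C)\bigr)$, i.e.\ \emph{without} a factor $1/2$ in the numerator, because the hypothesis of the lemma reads $\mathbb{E}(Z_n^p\mid\mathcal{F}_{n-1})\le C^{p-2}p!\,d_n^2$ rather than the classical Bernstein condition $\mathbb{E}(|Z_n|^p\mid\mathcal{F}_{n-1})\le \frac{p!}{2}C^{p-2}d_n^2$. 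With that bound, your ``canonical choice'' $\lambda=\epsilon/(D_n+C\epsilon)$ gives $1-\lambda C=D_n/(D_n+C\epsilon)$ and hence $-\lambda\epsilon+\lambda^2 D_n/(1-\lambda C)=-\epsilon^2/(D_n+C\epsilon)+\epsilon^2/(D_n+C\epsilon)=0$: the Chernoff bound is trivial, not $\exp\{-\epsilon^2/(2(D_n+C\epsilon))\}$. Even optimising exactly over $\lambda\in(0,1/C)$ only yields the exponent $-\epsilon^2/\bigl(\sqrt{D_n}+\sqrt{D_n+C\epsilon}\bigr)^2$, which is strictly weaker than the stated one. In other words, with the moment condition as displayed (no $1/2$), your argument delivers at best $2\exp\{-\epsilon^2/(2(2D_n+C\epsilon))\}$ (apply the classical bound with variance proxy $2d_i^2$); the constant in the lemma comes out only if the hypothesis carries the $p!/2$ normalisation, which is how the condition appears in de la Pe\~na--Gin\'e. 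So either the hypothesis must be read in that standard form, or your ``short algebraic simplification'' must be redone and the conclusion weakened.

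Two smaller points of rigour: the term-by-term expansion $\mathbb{E}[e^{\lambda Z_i}\mid\mathcal{F}_{i-1}]=1+\lambda\mathbb{E}[Z_i\mid\mathcal{F}_{i-1}]+\sum_{p\ge2}\frac{\lambda^p}{p!}\mathbb{E}[Z_i^p\mid\mathcal{F}_{i-1}]$ requires interchanging the infinite sum with the conditional expectation, which needs control of the \emph{absolute} conditional moments $\mathbb{E}(|Z_i|^p\mid\mathcal{F}_{i-1})$ (or integrability of $e^{\lambda|Z_i|}$), not just of the signed moments appearing in the hypothesis; and the same issue resurfaces in your two-sided step, since for odd $p$ the bound on $\mathbb{E}(Z_n^p\mid\mathcal{F}_{n-1})$ says nothing about $\mathbb{E}((-Z_n)^p\mid\mathcal{F}_{n-1})$ unless it is reinterpreted via $|Z_n|^p$, as you implicitly do. These are fixable by stating the hypothesis on $|Z_n|^p$, which again is the form assumed in the cited theorem; the induction via the tower property and the passage to the two-sided bound are otherwise fine.
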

As mentioned in \cite{laib2011} the proof of this lemma follows as a particular case of Theorem 8.2.2 due to \cite{gine}.

We consider also the following technical lemma whose proof my be found in \cite{laib2010}.

\begin{lemma}\label{lem1}
Assume that assumptions (A1) and (A2)(i), (A2)(ii) and (A2)(iv) hold true. For any real numbers $1\leq j\leq 2+\delta$ and $1\leq k\leq 2+\delta$ with $\delta >0,$ as $n\rightarrow\infty$, we have
\begin{itemize}
\item[(i)] $\displaystyle\frac{1}{\phi(h_K)}\mathbb{E}\left[ \Delta_i^j(x)\mid \mathcal{F}_{i-1}\right] = M_j f_{i,1}(x) + \mathcal{O}_{a.s.}\left( \frac{g_{i,x}(h_K)}{\phi(h_K)}\right),$
\item[(ii)] $\displaystyle\frac{1}{\phi(h_K)}\mathbb{E}\left[ \Delta_i^j(x)\right] = M_j f_{1}(x) + o(1)$,
\item[(iii)] $\displaystyle\frac{1}{\phi^k(h_K)}\left( \mathbb{E}(\Delta_1(x))\right)^k = M_1^k f_1^k(x) + o(1).$
\end{itemize}
\end{lemma}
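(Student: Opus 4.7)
The plan is to reduce all three claims to a single integration-by-parts calculation applied to the small-ball decompositions furnished by (A2). Write $\Delta_i(x) = K(D_i(x)/h_K)$ where $D_i(x) = d(x,X_i)$. Since $K$ is supported on $[0,1]$ and $C^1$ there by (A1), the key identity is, for any integer $j\geq 1$,
\begin{equation*}
\mathbb{E}\!\left[\Delta_i^j(x)\mid \mathcal{F}_{i-1}\right] \;=\; \int_0^{h_K} K^j(u/h_K)\,dF_x^{\mathcal{F}_{i-1}}(u)
\;=\; K^j(1)\,F_x^{\mathcal{F}_{i-1}}(h_K) \;-\; \int_0^1 (K^j)'(s)\,F_x^{\mathcal{F}_{i-1}}(h_K s)\,ds,
\end{equation*}
after a change of variables $u = h_K s$ and integration by parts; the boundary term at $0$ vanishes because $F_x^{\mathcal{F}_{i-1}}(0)=0$. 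The same identity, with $F_x$ in place of $F_x^{\mathcal{F}_{i-1}}$, handles the unconditional version.

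For (i), I would plug the decomposition from (A2)(ii), $F_x^{\mathcal{F}_{i-1}}(h_K s) = \phi(h_K s)\,f_{i,1}(x) + g_{i,x}(h_K s)$, into the displayed identity, divide by $\phi(h_K)$, and use (A2)(iv), $\phi(h_K s)/\phi(h_K) = \tau_0(s) + o(1)$ uniformly in $s \in [0,1]$. Collecting terms, the leading contribution becomes
\begin{equation*}
f_{i,1}(x)\,\Bigl[K^j(1) - \int_0^1 (K^j)'(s)\,\tau_0(s)\,ds\Bigr] \;=\; M_j\,f_{i,1}(x),
\end{equation*}
where the integral is finite by (A2)(iv). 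The remainder splits into a boundary piece $K^j(1)\,g_{i,x}(h_K)/\phi(h_K)$ and an integral piece $\int_0^1 (K^j)'(s)\,g_{i,x}(h_K s)/\phi(h_K)\,ds$; since $g_{i,x}(h)/\phi(h)$ is a.s.\ bounded and $\phi(h_K s)/\phi(h_K)$ is uniformly bounded by $\tau_0(1) + o(1)$, both are absorbed into $\mathcal{O}_{a.s.}(g_{i,x}(h_K)/\phi(h_K))$, which is the stated remainder.

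Claim (ii) proceeds along identical lines but uses (A2)(i) in place of (A2)(ii): the leading term is $M_j f_1(x)$ and the remainder is now a deterministic $o(1)$ because $F_x(h) - \phi(h) f_1(x) = o(\phi(h))$, again combined with (A2)(iv) to control $\phi(h_K s)/\phi(h_K)$. Claim (iii) is then a corollary of (ii) specialized to $j=1$: writing $\mathbb{E}[\Delta_1(x)] = \phi(h_K)\,(M_1 f_1(x) + o(1))$ and raising both sides to the $k$-th power gives $(\mathbb{E}[\Delta_1(x)])^k/\phi^k(h_K) = M_1^k f_1^k(x) + o(1)$ by continuity of $z \mapsto z^k$.

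The only delicate point I expect is the uniform control, in $s \in [0,1]$, of the ratios $\phi(h_K s)/\phi(h_K)$ and $g_{i,x}(h_K s)/\phi(h_K s)$ when passing the limit through the integral $\int_0^1 (K^j)'(s)\,(\cdot)\,ds$. This is resolved by (A2)(iv), which gives uniform convergence of $\phi(h_K s)/\phi(h_K)$ to the bounded nondecreasing $\tau_0$, together with the integrability of $(K^j)'(s)\tau_0(s)$; an application of dominated convergence (or just monotonicity of $\tau_0$ and the $o_{a.s.}$ bound on $g_{i,x}$) then justifies pulling the limit inside, so the remainder collapses to $\mathcal{O}_{a.s.}(g_{i,x}(h_K)/\phi(h_K))$ in (i) and $o(1)$ in (ii).
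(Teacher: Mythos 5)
Your proof is correct, and it is essentially \emph{the} proof of this lemma: the paper itself does not prove it, but simply defers to the cited reference \cite{laib2010}, where precisely this small-ball integration-by-parts computation is carried out, so your argument reconstructs the source rather than diverging from it. Two minor points. First, the lemma is stated for real exponents $1\le j\le 2+\delta$, not just integers; your identity still applies because (A1) gives $K^\prime<0$ on $[0,1]$ and $K(1)>0$, hence $K$ is bounded away from zero on $[0,1]$ and $K^j$ remains $\mathcal{C}^1$ there, with $(K^j)^\prime=jK^{j-1}K^\prime$. Second, after multiplying the uniform error in $\phi(h_K s)/\phi(h_K)=\tau_0(s)+o(1)$ by $f_{i,1}(x)$, part (i) strictly speaking carries an additional $o_{a.s.}(1)$ term (a deterministic $o(1)$ times the a.s.\ bounded $f_{i,1}(x)$) that is not literally of the form $\mathcal{O}_{a.s.}\left(g_{i,x}(h_K)/\phi(h_K)\right)$; this harmless imprecision is present in the original statement as well and affects none of the places where the lemma is used.
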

\begin{lemma}\label{lem2}
Assume that hypotheses (A1)-(A2) and the condition (\ref{cond}) are satisfied. Then, for any $x\in E$, we have

\begin{itemize}
\item[(i)] $\ell_n(x) - \overline{\ell}_n(x) = O_{a.s.}\left( \sqrt{\displaystyle\frac{\log n}{n\phi(h_K)}}\right)$,
\item[(ii)] $\lim_{n\rightarrow\infty} \ell_n(x) = \lim_{n\rightarrow\infty} \overline{\ell}_n(x) = 1\quad\mbox{a.s.}.$
\end{itemize}

\end{lemma}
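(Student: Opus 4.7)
The plan is to write $\ell_n(x)-\overline{\ell}_n(x)$ as a normalized sum of martingale differences and then apply the exponential inequality of Lemma \ref{fact1}, while part (ii) will follow by controlling $\overline{\ell}_n(x)$ directly via Lemma \ref{lem1} and assumption (A2), and then combining with (i).

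For part (i), set
\[
Z_{n,i}:=\frac{\Delta_i(x)-\E[\Delta_i(x)\mid\F_{i-1}]}{n\,\E(\Delta_1(x))},
\]
so that $\ell_n(x)-\overline{\ell}_n(x)=\sum_{i=1}^n Z_{n,i}$. Since $X_i$ is $\F_i$-measurable, $(Z_{n,i})_i$ is a martingale difference array with respect to $(\F_i)_i$. The kernel $K$ is bounded by (A1), hence $|\Delta_i(x)|\le\|K\|_\infty$ and consequently $|Z_{n,i}|\le 2\|K\|_\infty/(n\E(\Delta_1(x)))=:C_n$. For any $p\ge 2$,
\[
\E[Z_{n,i}^p\mid\F_{i-1}]\le C_n^{p-2}\,\frac{\E[\Delta_i^2(x)\mid\F_{i-1}]}{(n\E(\Delta_1(x)))^2}.
\]
By Lemma \ref{lem1}(i) applied with $j=2$, the numerator is $\phi(h_K)(M_2 f_{i,1}(x)+O_{a.s.}(g_{i,x}(h_K)/\phi(h_K)))$, while Lemma \ref{lem1}(ii) gives $\E(\Delta_1(x))\sim M_1 f_1(x)\phi(h_K)$. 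Hence, for suitable constants,
\[
\E[Z_{n,i}^p\mid\F_{i-1}]\le C_n^{p-2}\,p!\,d_{n,i}^2,\qquad d_{n,i}^2=\frac{A\,(f_{i,1}(x)+|g_{i,x}(h_K)|/\phi(h_K))}{n^2\phi(h_K)},
\]
almost surely. Using (A2)(iii) with $j=1$ and the boundedness of $g_{i,x}(h_K)/\phi(h_K)$ granted by (A2)(ii), one gets $D_n:=\sum_{i=1}^n d_{n,i}^2=O_{a.s.}(1/(n\phi(h_K)))$; moreover $C_n=O(1/(n\phi(h_K)))$.

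Now apply Lemma \ref{fact1} with $\epsilon=\eta_0\sqrt{\log n/(n\phi(h_K))}$. Because $\log n/(n\phi(h_K))\to 0$ by (\ref{cond}), the term $C_n\epsilon$ is negligible compared with $D_n$, and
\[
\frac{\epsilon^2}{2(D_n+C_n\epsilon)}\;\sim\;\frac{\eta_0^2\,\log n/(n\phi(h_K))}{2\,c/(n\phi(h_K))}\;=\;\frac{\eta_0^2}{2c}\log n.
\]
Choosing $\eta_0$ large enough yields a summable probability bound, and Borel--Cantelli delivers part (i). The main technical subtlety here is checking that the $O_{a.s.}$ remainder coming from (A2)(ii) can be absorbed uniformly in $i$ into the $d_{n,i}^2$ without destroying the order $O_{a.s.}(1/(n\phi(h_K)))$ of $D_n$; this is the place where one invokes the ``almost surely bounded'' clause of (A2)(ii).

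For part (ii), write
\[
\overline{\ell}_n(x)=\frac{1}{\phi(h_K)M_1 f_1(x)(1+o(1))}\left(\frac{M_1\phi(h_K)}{n}\sum_{i=1}^n f_{i,1}(x)+\frac{1}{n}\sum_{i=1}^n O_{a.s.}(g_{i,x}(h_K))\right)
\]
by Lemma \ref{lem1}(i)--(ii). Assumption (A2)(ii) with $j=1$ gives $n^{-1}\sum_i g_{i,x}(h_K)=o_{a.s.}(\phi(h_K))$ and (A2)(iii) with $j=1$ gives $n^{-1}\sum_i f_{i,1}(x)\to f_1(x)$ almost surely, so $\overline{\ell}_n(x)\to 1$ a.s. Combining this with part (i) yields $\ell_n(x)\to 1$ a.s., completing the proof.
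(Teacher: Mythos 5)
Your argument is correct and is essentially the proof the paper delegates to Laib and Louani (2010, Lemma 3): the same martingale-difference decomposition of $\ell_n(x)-\overline{\ell}_n(x)$ handled by the exponential inequality of Lemma \ref{fact1}, with part (ii) obtained from Lemma \ref{lem1} and (A2), exactly parallel to the paper's own treatment of the weighted sums in Lemma \ref{Qbis}. The only point to tighten is that Lemma \ref{fact1} asks for deterministic constants $d_n$, so in your bound you should replace $f_{i,1}(x)$ by its deterministic envelope $b_i(x)$ and invoke (A2)(v) (as the paper does in the proof of Lemma \ref{Qbis}) rather than (A2)(iii), which leaves your $d_{n,i}$ random.
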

\begin{proof} See the proof of Lemma 3 in \cite{laib2010}.
\end{proof}







\begin{proof} {\it of Proposition \ref{convFhat}}

\noindent Making use of the decomposition (\ref{decomp}), the result follows as a direct consequence of Lemmas \ref{diff} and \ref{equiv} below. 

\begin{lemma}\label{diff}
Under Assumptions (A1)-(A7) and the condition (\ref{cond}) , we have
\begin{eqnarray*}
\sup_{t\in S } \Big|\widetilde{F}_n(t \mid x) - F(t \mid x) \Big|=O_{a.s.}(h_K^\beta + h_H^\nu) + O_{a.s.}\left(\sqrt{\frac{\log n}{n\phi(h_K)}}\right).
\end{eqnarray*}
\label{lil}
\end{lemma}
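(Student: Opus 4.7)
The plan is to decompose, for each $t\in S$,
\[
\widetilde{F}_n(t\mid x) - F(t\mid x) = \frac{A_n(x,t) + B_n(x,t)}{\ell_n(x)} + F(t\mid x)\,\frac{\overline{\ell}_n(x)-\ell_n(x)}{\ell_n(x)},
\]
with $A_n(x,t) := \widetilde{F}_n(x,t) - \overline{\widetilde{F}}_n(x,t)$ the stochastic (martingale) part and $B_n(x,t) := \overline{\widetilde{F}}_n(x,t) - F(t\mid x)\,\overline{\ell}_n(x)$ the bias. Lemma~\ref{lem2} already gives $\ell_n(x) \to 1$ a.s.\ and $|\ell_n(x) - \overline{\ell}_n(x)| = O_{a.s.}(\sqrt{\log n / (n\phi(h_K))})$, and $F(\cdot\mid x)$ is bounded on $S$, so the whole problem reduces to controlling $\sup_{t\in S}|A_n(x,t)|$ and $\sup_{t\in S}|B_n(x,t)|$.

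For the bias term I would first use the independence assumption (A7) to obtain the classical identity $\E[\delta_i\,\bar G^{-1}(Y_i)\,\varphi(Y_i)\mid X_i] = \E[\varphi(T_i)\mid X_i]$ valid for bounded $\varphi$. Applied to $\varphi(y)=H(h_H^{-1}(t-y))$, followed by the change of variable $u=(t-s)/h_H$, integration by parts using (A5), and the H\"older continuity (A3)(ii), one gets uniformly in $t\in S$ and on the event $\{d(x,X_i)\leq h_K\}$,
\[
\bigl|\E[\delta_i \bar G^{-1}(Y_i)H(h_H^{-1}(t-Y_i))\mid X_i] - F(t\mid x)\bigr| = O(h_K^\beta + h_H^\nu).
\]
Assumption (A4) transfers the same bound to the conditional expectation given $\F_{i-1}$. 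Averaging against $\Delta_i(x)/(n\E\Delta_1(x))$ and invoking Lemma~\ref{lem2}(ii) (so that $\overline{\ell}_n(x)\to 1$ a.s.) then yields $\sup_{t\in S}|B_n(x,t)| = O_{a.s.}(h_K^\beta + h_H^\nu)$.

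For the stochastic term I would apply the exponential inequality of Lemma~\ref{fact1} to the triangular array of martingale differences
\[
\eta_{n,i}(t) := \frac{1}{n\,\E[\Delta_1(x)]}\Bigl\{ \delta_i\,\bar G^{-1}(Y_i)\,H(h_H^{-1}(t-Y_i))\,\Delta_i(x) - \E[\,\cdot\mid\F_{i-1}]\Bigr\}.
\]
The boundedness of $H$, the bound $\bar G^{-1}(Y_i\wedge\tau)\leq 1/\bar G(\tau) <\infty$ (since $q_\alpha(x)\in S\cup(-\infty,\tau]$ with $\tau<\tau_G\wedge\tau_F$), and Lemma~\ref{lem1}(i)--(ii) together deliver a Bernstein-type bound $\E[|\eta_{n,i}(t)|^p\mid\F_{i-1}]\leq C^{p-2}\,p!\,d_{n,i}^2$ with $\sum_i d_{n,i}^2 = O(1/(n\phi(h_K)))$; choosing $\epsilon = \epsilon_0\sqrt{\log n/(n\phi(h_K))}$ then yields a pointwise probability bound of order $n^{-\epsilon_0^2/C'}$. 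For uniformity over $t\in S$, since $S$ is compact and $H^{(1)}$ bounded, I would cover $S$ by $N_n \leq c\,n^\kappa$ centers $t_j$, control the oscillation by
\[
|A_n(x,t)-A_n(x,t_j)| \leq 2\|H^{(1)}\|_\infty\,\frac{|t-t_j|}{h_H}\cdot\frac{1}{n\E\Delta_1(x)}\sum_{i=1}^n\delta_i\,\bar G^{-1}(Y_i)\,\Delta_i(x),
\]
apply the pointwise bound at each $t_j$ with a union bound over $N_n$, and conclude by Borel--Cantelli for $\kappa$ large enough. The main obstacle is checking the Bernstein moment condition cleanly: it rests on first invoking (A4) to reduce $\E[\,\cdot\mid\G_{i-1}]$ to $\E[\,\cdot\mid X_i]$, then using Lemma~\ref{lem1} to extract the correct $\phi(h_K)$ rate, while absorbing the $\bar G^{-1}$ factor via the bound at $\tau$.
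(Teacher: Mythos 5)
Your proposal is correct and follows essentially the same route as the paper's own proof: the same centring on conditional expectations given $\F_{i-1}$ (an algebraically equivalent regrouping of the paper's $B_n$, $R_n$, $Q_n$ decomposition), the same bias treatment via double conditioning using (A7)/(A4) together with (A3) and (A5), the same martingale exponential inequality (Lemma \ref{fact1}) with moments controlled through Lemma \ref{lem1} and Borel--Cantelli for the stochastic term, and Lemma \ref{lem2} for the $\ell_n$ terms. Your explicit covering argument for the supremum over $t\in S$ is in fact more careful than the paper, which establishes the bound pointwise in $t$ and then asserts the uniform statement.
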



\begin{lemma}\label{equiv}
Assume that hypothesis (A1)-(A7) and the condition (\ref{cond}) hold, we have
\begin{eqnarray*}
\sup_{t\in S } \Big|\widehat{F}_n(t \mid x) - \widetilde{F}_n(t \mid x) \Big|=O_{a.s.}\left(\sqrt{\frac{\log\log n}{n}}\right).
\end{eqnarray*}
\label{lil}
\end{lemma}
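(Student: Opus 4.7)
The plan is to reduce the problem to bounding the difference between the Kaplan--Meier estimator $\bar G_n$ and the true censoring survival function $\bar G$, and then invoke the well-known law of the iterated logarithm for $\bar G_n$ due to F\"oldes and Rejt\H{o}, namely
\begin{eqnarray*}
\sup_{s\leq \tau}\bigl|\bar G_n(s)-\bar G(s)\bigr| = O_{a.s.}\!\left(\sqrt{\frac{\log\log n}{n}}\right),
\end{eqnarray*}
valid since $\tau<\tau_G\wedge \tau_F$ guarantees $\bar G(\tau)>0$.

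First I would factor out the common denominator $\ell_n(x)$, writing
\begin{eqnarray*}
\widehat F_n(t\mid x)-\widetilde F_n(t\mid x) = \frac{1}{\ell_n(x)}\Bigl(\widehat F_n(x,t)-\widetilde F_n(x,t)\Bigr),
\end{eqnarray*}
and use Lemma \ref{lem2}(ii), which gives $\ell_n(x)\to 1$ almost surely, to reduce the lemma to controlling the numerator difference uniformly in $t\in S$. Second, I would exploit the algebraic identity
\begin{eqnarray*}
\bar G_n^{-1}(Y_i)-\bar G^{-1}(Y_i) = \frac{\bar G(Y_i)-\bar G_n(Y_i)}{\bar G(Y_i)\,\bar G_n(Y_i)},
\end{eqnarray*}
so that
\begin{eqnarray*}
\widehat F_n(x,t)-\widetilde F_n(x,t) = \frac{1}{n\,\E(\Delta_1(x))}\sum_{i=1}^n \delta_i \,\frac{\bar G(Y_i)-\bar G_n(Y_i)}{\bar G(Y_i)\,\bar G_n(Y_i)}\, H(h_H^{-1}(t-Y_i))\,\Delta_i(x).
\end{eqnarray*}

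Next, because only indices with $\delta_i=1$ contribute, one has $Y_i=T_i$, and on the set of indices whose kernel weight $\Delta_i(x)$ and indicator factor $H(h_H^{-1}(t-Y_i))$ are non-negligible for $t\in S\subset(-\infty,\tau]$, the values $Y_i$ effectively lie in a compact subset of $[0,\tau_F\wedge\tau_G)$; thus $\bar G(Y_i)$ is bounded below by a positive constant, and by the uniform convergence of $\bar G_n$ the same is true of $\bar G_n(Y_i)$ for $n$ large enough, almost surely. Consequently the ratio in the display above is bounded by $C\,\sup_{s\leq\tau}|\bar G_n(s)-\bar G(s)|$ uniformly in $i$ and in $t\in S$.

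Factoring this out yields
\begin{eqnarray*}
\sup_{t\in S}\bigl|\widehat F_n(x,t)-\widetilde F_n(x,t)\bigr|\leq C\,\sup_{s\leq\tau}\bigl|\bar G_n(s)-\bar G(s)\bigr|\cdot \sup_{t\in S}\widetilde F_n(x,t).
\end{eqnarray*}
By Lemma \ref{diff}, $\widetilde F_n(t\mid x)\to F(t\mid x)$ a.s.\ uniformly in $t\in S$, and together with $\ell_n(x)\to 1$ this implies $\sup_{t\in S}\widetilde F_n(x,t)=O_{a.s.}(1)$. Combining this with the F\"oldes--Rejt\H{o} rate and dividing by $\ell_n(x)$ delivers the claimed bound $O_{a.s.}(\sqrt{\log\log n/n})$.

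The main technical obstacle is the handling of large $Y_i$: one must argue carefully that indices with $Y_i$ near $\tau_G$ (where $\bar G(Y_i)$ degenerates) contribute negligibly, either by restricting attention to $\{Y_i\leq\tau\}$ and controlling the remainder via the tail decay of $H(h_H^{-1}(t-Y_i))$ for $t\in S$, or by invoking a truncation argument. Once this is handled, the rest of the proof is routine.
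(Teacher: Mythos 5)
Your proposal follows the paper's own proof essentially verbatim: the same reduction via the identity $\bar G_n^{-1}(Y_i)-\bar G^{-1}(Y_i)=(\bar G(Y_i)-\bar G_n(Y_i))/(\bar G(Y_i)\bar G_n(Y_i))$, the same bound of the form $\sup_{s\le\tau}|\bar G_n(s)-\bar G(s)|\,\widetilde F_n(t\mid x)/\bar G_n(\tau)$ using $\bar G(\tau)>0$, boundedness of $\widetilde F_n$ from Lemma \ref{diff} together with $\ell_n(x)\to 1$, and a law of the iterated logarithm for the Kaplan--Meier estimator of the censoring law. The only substantive difference is the citation: the paper invokes the SLLN and Theorem 3.2 of Cai and Roussas (1992) rather than the F\"oldes--Rejt\H{o} i.i.d.\ result, which is the more defensible choice in this setting since the $Y_i=T_i\wedge C_i$ inherit dependence from the ergodic $T_i$'s, so your proof is correct modulo substituting a LIL valid for dependent observations.
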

\end{proof}
We provide, in the following lemma, the almost sure consistency, without rate, of $\widehat{q}_{n,\a}(x)$.
\begin{lemma}\label{convNR}
Under assumptions of Proposition \ref{convFhat}, we have
$$
\lim_{n\rightarrow\infty}\Big| \widehat{q}_{n,\a}(x) - q_{\a}(x)\Big| = 0, \quad \mbox{a.s.}
$$
\end{lemma}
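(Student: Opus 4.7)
The plan is to deduce almost sure convergence of the estimated quantile from the uniform almost sure consistency of $\widehat{F}_n(\cdot\mid x)$ already provided by Proposition \ref{convFhat}. This is the classical strategy that converts uniform convergence of a cumulative distribution function on a compact set into convergence of its generalized inverse, exploiting the strict monotonicity and continuity of $F(\cdot\mid x)$ near $q_\a(x)$ guaranteed by assumption (A3) (positivity of the conditional density $f(\cdot\mid x)$ ensures that $F(\cdot\mid x)$ is strictly increasing in a neighborhood of $q_\a(x)$, and $q_\a(x)$ is the unique solution of $F(q_\a(x)\mid x)=\a$).

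Fix $\epsilon>0$ small enough so that $[q_\a(x)-\epsilon,q_\a(x)+\epsilon]\subset S$. Using the strict monotonicity of $F(\cdot\mid x)$ at $q_\a(x)$, define
\begin{eqnarray*}
\eta \;=\; \min\bigl\{\, \a - F(q_\a(x)-\epsilon\mid x),\; F(q_\a(x)+\epsilon\mid x) - \a \,\bigr\} \;>\; 0.
\end{eqnarray*}
By Proposition \ref{convFhat}, the supremum $\sup_{t\in S}|\widehat{F}_n(t\mid x)-F(t\mid x)|$ tends to $0$ almost surely, so there exists an almost sure event $\Omega_0$ and a (random) integer $N$ such that for every $n\geq N$ one has $\sup_{t\in S}|\widehat{F}_n(t\mid x)-F(t\mid x)|<\eta$ on $\Omega_0$.

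On $\Omega_0$, for $n\geq N$, evaluate at the two endpoints. At $t=q_\a(x)-\epsilon$ one gets $\widehat{F}_n(q_\a(x)-\epsilon\mid x)<F(q_\a(x)-\epsilon\mid x)+\eta\leq \a$, hence by the definition (\ref{estQ}) of $\widehat{q}_{n,\a}(x)$ as an infimum, $\widehat{q}_{n,\a}(x)\geq q_\a(x)-\epsilon$. At $t=q_\a(x)+\epsilon$ one gets $\widehat{F}_n(q_\a(x)+\epsilon\mid x)>F(q_\a(x)+\epsilon\mid x)-\eta\geq \a$, so again by (\ref{estQ}), $\widehat{q}_{n,\a}(x)\leq q_\a(x)+\epsilon$. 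Combining the two inequalities yields $|\widehat{q}_{n,\a}(x)-q_\a(x)|\leq \epsilon$ for all $n\geq N$ on $\Omega_0$, and since $\epsilon$ was arbitrary, the conclusion follows almost surely.

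The only delicate point is the initial choice of $\epsilon$ small enough so that the interval $[q_\a(x)-\epsilon,q_\a(x)+\epsilon]$ lies in the compact set $S$ where uniform convergence is available; this is harmless because $q_\a(x)$ is assumed to belong to $S$ and $S$ is taken compact in the statement preceding the assumptions. No rate is claimed here, so the argument does not need to quantify $\eta$ in terms of $\epsilon$ (which would involve the conditional density $f(q_\a(x)\mid x)$ and produce Theorem \ref{theoremQ}); the present lemma only needs the qualitative continuity of the inverse map at $\a$.
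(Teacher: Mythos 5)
Your proof is correct and follows essentially the same route as the paper's: both convert the uniform almost sure consistency of $\widehat{F}_n(\cdot\mid x)$ over the compact set $S$ (Proposition \ref{convFhat}) into consistency of the quantile via the separation quantity $\eta(\epsilon)>0$ furnished by the continuity of $F(\cdot\mid x)$ and the uniqueness of $q_\a(x)$. The only cosmetic difference is that you sandwich $\widehat{q}_{n,\a}(x)$ by evaluating $\widehat{F}_n$ at $q_\a(x)\pm\epsilon$ and invoking its monotonicity, whereas the paper evaluates $F$ at $\widehat{q}_{n,\a}(x)$, uses $\widehat{F}_n(\widehat{q}_{n,\a}(x)\mid x)=\a$ and a Borel--Cantelli-type bound; both hinge on exactly the same ingredients.
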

\begin{proof}{\it of Lemma \ref{convNR}}

\noindent Following the similar steps as in \cite{ezzahrioui}, the proof of this lemma is based in the following decomposition. As $F(\cdot\mid x)$ is a distribution function with a unique quantile of order $\a$, then for any $\epsilon > 0$, let:
$$
\eta(\epsilon) = \min\{F(q_\a(x) + \epsilon \mid x) - F(q_\a(x) \mid x), F(q_\a(x) \mid x) - F(q_\a(x) - \epsilon\mid x) \},
$$ 
then
$$
\forall \epsilon >0, \forall t >0, \quad|q_\a(x) - t| \geq \epsilon \Rightarrow |F(q_\a(x)\mid x) - F(t\mid x)| \geq \eta(\epsilon).
$$
Now, using (\ref{eq1}) and (\ref{verif}) we have
\begin{eqnarray}\label{ineq}
\Big| F(\widehat{q}_{n,\a}(x)\mid x) - F(q_{\a}(x)\mid x)\Big| &=& \Big| F(\widehat{q}_{n,\a}(x) \mid x) - \widehat{F}_n(\widehat{q}_{n,\a}(x)\mid x)\Big|\nonumber\\
&\leq& \sup_{t\in\S}\Big| F(t \mid x) - \widehat{F}_n(t \mid x)\Big|
\end{eqnarray}
The consistency of $\widehat{q}_{n,\a}$ follows then immediately from Proposition \ref{convFhat}, the continuity of $F(\cdot\mid x)$ and the following inequality
$$
\sum_n^\infty \P\left(|\widehat{q}_{n,\a}(x) - q_\a(x)| \geq \epsilon \right) \leq \sum_n^\infty \P\left( \sup_{t\in\S} |F(t \mid x) - \widehat{F}_n(t \mid x)|\geq \eta(\epsilon)\right).
$$
\end{proof}
\begin{proposition}\label{density}
Under assumptions (A1)-(A5) together with condition (\ref{cond}), we have
\begin{eqnarray*}
\sup_{t\in\S}\Big| \hat{f}_n(t\mid x) - f(t\mid x)\Big| = O_{a.s.}(h_K^\beta + h_H^\nu) + O_{a.s.}\left(\left( \frac{\log n}{n\phi(h_K)}\right)^{1/2} \right).
\end{eqnarray*}
\end{proposition}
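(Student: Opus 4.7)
\textit{Sketch for Proposition \ref{density}.}

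The natural estimator of $f(t\mid x)$ is obtained by differentiating $\widehat{F}_n(t\mid x)$ with respect to $t$, namely
$\widehat{f}_n(t\mid x) = \widehat{f}_n(x,t)/\ell_n(x)$ with
$$\widehat{f}_n(x,t) = \frac{1}{n h_H \E(\Delta_1(x))}\sum_{i=1}^n \delta_i \bar G_n^{-1}(Y_i)\, H^{(1)}\!\bigl(h_H^{-1}(t-Y_i)\bigr)\,\Delta_i(x).$$
Define the pseudo-estimator $\widetilde{f}_n(t\mid x) = \widetilde{f}_n(x,t)/\ell_n(x)$ obtained by replacing $\bar G_n$ by the true $\bar G$, and write the decomposition
$$\widehat{f}_n(t\mid x) - f(t\mid x) \;=\; \bigl[\widehat{f}_n(t\mid x) - \widetilde{f}_n(t\mid x)\bigr] + \bigl[\widetilde{f}_n(t\mid x) - f(t\mid x)\bigr].$$
The plan is to handle the second (pseudo) bracket by the same bias/variance split used in Lemma \ref{diff}, and to handle the first (Kaplan--Meier substitution) bracket by the law of iterated logarithm for $\bar G_n - \bar G$, exactly as in Lemma \ref{equiv}.

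For the pseudo bracket, I would further split
$$\widetilde{f}_n(t\mid x) - f(t\mid x) = \frac{\widetilde{f}_n(x,t)-\overline{\widetilde f}_n(x,t)}{\ell_n(x)} + \frac{\overline{\widetilde f}_n(x,t) - f(t\mid x)\,\overline{\ell}_n(x)}{\ell_n(x)} + f(t\mid x)\frac{\overline{\ell}_n(x)-\ell_n(x)}{\ell_n(x)},$$
where $\overline{\widetilde f}_n(x,t)$ is built from the conditional expectations given $\F_{i-1}$. The third summand is controlled by Lemma \ref{lem2}. For the deterministic bias (second summand), I would iterate conditional expectations using (A4), exploit (A7) so that $\E[\delta_i \bar G^{-1}(Y_i)\, H^{(1)}(h_H^{-1}(t-Y_i))\mid X_i] = \int H^{(1)}(h_H^{-1}(t-u))f(u\mid X_i)\, du$, then change variables, apply a Taylor-like argument together with the Hölder condition in (A3)(ii) on $f=F^{(1)}$ and the moment condition (A5) on $H^{(1)}$ to produce an $O_{a.s.}(h_K^\beta + h_H^\nu)$ contribution; Lemma \ref{lem1} provides the required control of $\E[\Delta_i^j(x)\mid\F_{i-1}]/\phi(h_K)$.

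For the variance (first summand) I would write the numerator as $(n\E\Delta_1(x))^{-1}\sum_i Z_i(t)$ where
$$Z_i(t) = h_H^{-1}\delta_i\bar G^{-1}(Y_i)H^{(1)}(h_H^{-1}(t-Y_i))\Delta_i(x) - \E\!\left[\cdot\mid\F_{i-1}\right]$$
is an $\F_i$-martingale difference. Conditional moments of order $p$ are bounded via (A4), (A6) and Lemma \ref{lem1}, which yields a Bernstein-type bound $d_n^2 = O(\phi(h_K)/h_H)$ for the exponential inequality in Lemma \ref{fact1}; choosing $\epsilon$ proportional to $\sqrt{\log n/(n\phi(h_K))}$ and combining with a finite covering of the compact $\S$ (whose cardinality is polynomial in $n$ thanks to the Hölder modulus in (A3)(ii)) converts pointwise control into uniform control by the Borel--Cantelli lemma.

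Finally, for the Kaplan--Meier substitution bracket, the identity $\bar G_n^{-1}-\bar G^{-1} = (\bar G - \bar G_n)/(\bar G\,\bar G_n)$ on $S\cup(-\infty,\tau]$, together with the classical rate $\sup_{t\le\tau}|\bar G_n(t)-\bar G(t)|=O_{a.s.}(\sqrt{\log\log n/n})$, reduces $\sup_t|\widehat f_n(t\mid x)-\widetilde f_n(t\mid x)|$ to $O_{a.s.}(\sqrt{\log\log n/n})\cdot\sup_t|\widetilde f_n(t\mid x)|$, which is negligible with respect to the other terms. The principal obstacle is the uniform-in-$t$ control of the martingale part: the factor $h_H^{-1}$ inside $H^{(1)}$ inflates conditional variances and forces one to balance the covering mesh on $\S$ against the Bernstein tail, which is where the hypotheses (A5)--(A6) and the bandwidth condition (\ref{cond}) must be used delicately so that the announced rate $O_{a.s.}(\sqrt{\log n/(n\phi(h_K))})$ indeed dominates.
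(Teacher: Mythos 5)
Your route is exactly the one the paper intends: its entire proof of Proposition \ref{density} is a one-sentence appeal to ``similar decompositions and steps'' as in Proposition \ref{convFhat}, and your sketch spells that out (Kaplan--Meier substitution controlled by the LIL as in Lemma \ref{equiv}, bias via (A3)--(A5), (A7) and Lemma \ref{lem1}, the $\ell_n-\overline{\ell}_n$ term via Lemma \ref{lem2}, and the martingale part via Lemma \ref{fact1}). So there is no methodological divergence to report; the issue is whether the quantitative step you yourself flag as ``delicate'' actually closes.

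It does not, as sketched. For the density the martingale differences carry the factor $h_H^{-1}H^{(1)}(h_H^{-1}(t-Y_i))$, and, as you correctly compute, the conditional moments give $d_i^2=O(\phi(h_K)/h_H)$, hence $D_n=O(n\phi(h_K)/h_H)$. Feeding the threshold $\epsilon=n\E(\Delta_1(x))\,\epsilon_0\sqrt{\log n/(n\phi(h_K))}$ into Lemma \ref{fact1} then gives a tail of order $\exp\{-C\epsilon_0^{2}h_H\log n\}=n^{-C\epsilon_0^{2}h_H}$, which is not summable for any fixed $\epsilon_0$ since $h_H\rightarrow 0$, so Borel--Cantelli cannot be invoked and the announced rate $O_{a.s.}(\sqrt{\log n/(n\phi(h_K))})$ does not follow. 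What your argument actually delivers is the stochastic rate $O_{a.s.}\bigl(\sqrt{\log n/(n\,h_H\,\phi(h_K))}\bigr)$, and only under a strengthened bandwidth condition of the type $n h_H\phi(h_K)/\log n\rightarrow\infty$, which is not contained in (\ref{cond}); this is the standard form for conditional density estimators, and the extra $h_H$ cannot be removed by balancing the covering mesh, because the variance inflation is intrinsic, not an artifact of the uniformity device. (Two secondary points: the paper's own ``proof'' hides the same difficulty by giving no detail; and your covering argument over $t\in S$ needs some smoothness of $H^{(1)}$ -- e.g.\ a Lipschitz condition or a bounded $H^{(2)}$ -- to control increments of $\widehat{f}_n(\cdot\mid x)$ on the mesh, which (A5) does not supply; the H\"older condition (A3)(ii) only concerns the target $f(\cdot\mid x)$, not the estimator.)
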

\begin{proof}{\it of Proposition \ref{density}}.

\noindent Following a similar decompositions and steps as in the proof  of Propositions \ref{convFhat}, we can easily prove the result of Proposition \ref{density}.
\end{proof}

\begin{proof}{\it of Theorem \ref{theoremQ} }

\noindent Using a Taylor expansion of the function $F(\widehat{q}_{n,\a}(x) \mid x)$ around $q_{\a}(x)$ we get:
\begin{eqnarray}\label{Taylor1}
F(\widehat{q}_{n,\a}(x) \mid x) - F(q_{\a}(x) \mid x) = \left( \widehat{q}_{n,\a}(x)-  q_\a(x) \right) f\left(q^\star_{n,\a}(x)\mid x\right),
\end{eqnarray}
where $q^\star_{n,\a}(x)$ lies between $q_\a(x)$ and $\widehat{q}_{n,\a}(x)$. Equation (\ref{Taylor1}) shows that from the asymptotic behavior of  $F(\widehat{q}_{n,\a}(x) \mid x) - F(q_{\a}(x) \mid x)$ as $n$ goes to infinity, it is easy to obtain asymptotic results for the sequence $\left( \widehat{q}_{n,\a}(x) -  q_\a(x)\right)$.

Subsequently, considering the statement (\ref{ineq}) together with the statement (\ref{Taylor1}), we obtain
\begin{eqnarray}\label{r1}
\Big| \widehat{q}_{n,\a}(x)-  q_\a(x) \Big| \times \Big|f\left(q^\star_{n,\a}(x)\mid x\right)\Big| = O\left(\sup_{t\in\S} \Big|F(t \mid x) - \widehat{F}_n(t \mid x)\Big| \right)
\end{eqnarray}
Using Lemma \ref{convNR}, condition (A3) and the statement (\ref{r1}), we get
\begin{eqnarray}\label{r2}
\Big| \widehat{q}_{n,\a}(x)-  q_\a(x) \Big| = O\left(\sup_{t\in\S} \Big|F(t \mid x) - \widehat{F}_n(t \mid x)\Big| \right),
\end{eqnarray}
which is enough, while considering Proposition \ref{convFhat}, to complete the proof of Theorem \ref{theoremQ}.

\end{proof}

\begin{proof}{\it of Theorem \ref{normF}}

\noindent To proof our result we need to introduce the following decomposition
\begin{eqnarray*}
\widehat{F}_n(t \mid x) - F(t \mid x) := J_{1,n} + J_{2,n} + J_{3,n},
\end{eqnarray*}
where $J_{1,n} := \widehat{F}_n(t \mid x) - \widetilde{F}_n(t \mid x)$, $J_{2,n} := \widetilde{F}_n(t \mid x) - \overline{\widetilde{F}}_n(t \mid x)$ and $J_{3,n} := \overline{\widetilde{F}}_n(t \mid x) - F(t\mid x).$
First, we establish that $J_{1,n}$ and $J_{3,n}$ are negligible, as $n\rightarrow\infty$, whereas $J_{2,n}$ is asymptotically normal.
Observe that the term $J_{1,n} :=  \widehat{F}_n(t \mid x) - \widetilde{F}_n(t \mid x)$ has been studied in Lemma \ref{equiv}, then we have 
\begin{eqnarray}\label{J1}
J_{1,n} = O_{a.s.}\left(\sqrt{\frac{\log_2n}{n}}\right).
\end{eqnarray}
On the other hand the term $J_{3,n} :=  \overline{\widetilde{F}}_n(t \mid x) - F(t\mid x)$ is equal to $B_n(x,t)$ which uniformly converges almost surely to zero (with rate $h_K^\beta + h_H^\nu$) by the Lemma \ref{biais} given in the Appendix. Then, we have
\begin{eqnarray}\label{J3}
J_{3,n} = O_{a.s.}(h_K^\beta + h_H^\nu).
\end{eqnarray}

Now, let us consider the term $J_{2,n}$ which will provide us the asymptotic normality. For this end, we consider the following decomposition of the term $J_{2,n}$.
\begin{eqnarray}\label{J2decomp}
J_{2,n} &=& \widetilde{F}_n(t \mid x) - \overline{\widetilde{F}}_n(t \mid x) \nonumber\\
&:= & \frac{Q_n(x,t) + R_n(x,t)}{\ell_n(x)},
\end{eqnarray}
where $Q_n(x,t) := [\widetilde{F}_n(x,t)- \overline{\widetilde{F}}_n(x,t)] - F(t\mid x) (\ell_n(x) - \overline{\ell}_n(x))$ and $R_n(x,t) := -B_n(x,t) (\ell_n(x) - \overline{\ell}_n(x))$, where $B_n(x,t):= \frac{\overline{\widetilde{F}}_n(x,t)}{\overline{\ell}_n(x)} - F(t\mid x)$.
Using results of Lemma \ref{biais}, we have, for any fixed $x\in E$, $B_n(x, t)$ and therefore $R_n(x, t)$ converge almost surely to zero when $n$ goes to infinity. Thus, the asymptotic normality will be provided by the term $Q_n(x, t)$ which is treated by the Lemma \ref{normalityQ} below.
\begin{lemma}\label{normalityQ}
Suppose that assumptions (A1)-(A3), (A5), (A8)-(A9) hold, and condition (\ref{cond}) satisfied, then we have
\begin{eqnarray*}
\sqrt{n\phi(h_K)}\;\; Q_n(x,t) \stackrel{\mathcal{D}}{\longrightarrow} \mathcal{N}\left( 0, \sigma^2(x,t)\right), \quad \mbox{as}\; n\rightarrow\infty,
\end{eqnarray*}
where $\sigma^2(x,t)$ is defined in Theorem \ref{normF}.
\end{lemma}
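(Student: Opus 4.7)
The plan is to recast $\sqrt{n\phi(h_K)}\,Q_n(x,t)$ as the partial sum of a triangular array of martingale differences and apply a central limit theorem for martingale differences, as in Corollary 3.1 of Hall and Heyde. Concretely, set
$$\zeta_{n,i}(t):=\Delta_i(x)\bigl[\delta_i\bar G^{-1}(Y_i)\,H(h_H^{-1}(t-Y_i))-F(t\mid x)\bigr],$$
and
$$\eta_{n,i}:=\sqrt{\frac{\phi(h_K)}{n}}\;\frac{\zeta_{n,i}(t)-\E[\zeta_{n,i}(t)\mid\F_{i-1}]}{\E(\Delta_1(x))}.$$
Then $\sqrt{n\phi(h_K)}\,Q_n(x,t)=\sum_{i=1}^n\eta_{n,i}$; after harmlessly augmenting $\F_i$ with $\sigma(C_1,\ldots,C_i)$, which is lawful by (A7), $(\eta_{n,i})$ is a martingale difference array. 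Two properties must then be checked: the conditional variance convergence $V_n:=\sum_{i=1}^n\E[\eta_{n,i}^2\mid\F_{i-1}]\stackrel{\P}{\to}\sigma^2(x,t)$ and a Lyapunov-type negligibility $L_n:=\sum_{i=1}^n\E|\eta_{n,i}|^{2+\delta}\to 0$.

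I would dispose of $L_n$ first. On $S$ one has $\bar G^{-1}(\cdot)\le\bar G^{-1}(\tau)<\infty$, the kernel $K$ is bounded, and (A8)(ii) provides the $(2+\delta)$-moment of the integrand, so $\E|\zeta_{n,i}|^{2+\delta}=O(\phi(h_K))$. Combined with $\E(\Delta_1(x))\asymp\phi(h_K)$ from Lemma \ref{lem1}(ii), this gives $L_n=O((n\phi(h_K))^{-\delta/2})=o(1)$ under~(\ref{cond}).

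The heart of the proof is $V_n$. Using (A4) together with (A8)(i) to reduce the conditioning successively to $X_i$ and then to $\F_{i-1}$, one gets
$$\E[\zeta_{n,i}^2\mid\G_{i-1}]=\Delta_i^2(x)\bigl\{W_2(t\mid X_i)+(F(t\mid X_i)-F(t\mid x))^2\bigr\}.$$
On the support of $\Delta_i(x)$ one has $d(X_i,x)\le h_K$, so the quadratic remainder is $O(h_K^{2\beta})$ by (A3)(ii) and is negligible after normalisation, while continuity of $W_2$ near $x$ (via (A6)/(A8)(ii)) allows one to replace $W_2(t\mid X_i)$ by $W_2(t\mid x)+o(1)$. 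Plugging this into $V_n$ and invoking Lemma \ref{lem1}(i) with $j=2$, together with the ergodic averaging $n^{-1}\sum_i f_{i,1}(x)\to f_1(x)$ from (A2)(iii), yields
$$V_n\longrightarrow\frac{M_2}{M_1^2 f_1(x)}\,W_2(t\mid x)\qquad\text{a.s.}$$
The subtracted piece $\sum_i(\E[\zeta_{n,i}\mid\F_{i-1}])^2$ is treated exactly as the bias $B_n(x,t)$ of Lemma \ref{biais} and contributes only $o(1)$. It remains to identify $W_2(t\mid x)$: a direct moment calculation exploiting (A7), the identity $\delta_i^2=\delta_i$ and $\E[\delta_i\bar G^{-1}(Y_i)\psi(Y_i)\mid X_i]=\E[\psi(T_i)\mid X_i]$ (valid by independence of $C_i$ from $(X_i,T_i)$), together with $h_H\downarrow 0$ inside $H$ and $H^2$, gives $W_2(t\mid x)\to F(t\mid x)\bigl(\bar G^{-1}(t)-F(t\mid x)\bigr)$, reproducing the $\sigma^2(x,t)$ announced in Theorem \ref{normF}.

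The main obstacle is $V_n$: one must juggle four simultaneous approximations — the conditioning reduction via (A4)/(A8)(i), the spatial localisation $X_i\approx x$ controlled by (A3)(ii) and (A6), the ergodic averaging of $\E[\Delta_i^j(x)\mid\F_{i-1}]/\phi(h_K)$ from Lemma \ref{lem1}, and the bandwidth limit $h_H\downarrow 0$ inside $W_2(t\mid x)$ — while keeping every remainder quantitatively small. Once these are cleanly separated, the remaining steps (application of the martingale CLT and identification of constants) are routine.
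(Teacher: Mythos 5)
Your proposal is correct and follows essentially the same route as the paper: write $\sqrt{n\phi(h_K)}\,Q_n(x,t)$ as a sum of martingale differences $\eta_{ni}-\E[\eta_{ni}\mid\F_{i-1}]$, apply the Hall--Heyde martingale CLT, prove convergence of the conditional variances via (A8)(i), the localisation $d(X_i,x)\le h_K$, Lemma \ref{lem1} with $j=2$ and the ergodic averaging in (A2), and handle negligibility through a $(2+\delta)$-moment (Lyapunov/Lindeberg) bound using (A8)(ii), exactly as in the paper's parts (a) and (b). The only cosmetic differences are that the paper verifies the Lindeberg condition via Chow--Teicher plus H\"older--Markov (which amounts to your Lyapunov bound) and identifies the limit $F(t\mid x)\bigl(\bar G^{-1}(t)-F(t\mid x)\bigr)$ by a Taylor expansion of $\bar G^{-1}$ around $t$ using (A9), which is the detailed version of your ``direct moment calculation''.
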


\noindent Finally, the proof of Theorem \ref{normF} can be achieved by considering equations (\ref{J1}), (\ref{J3}) and Lemma \ref{normalityQ}.

\end{proof}

\begin{proof} of Theorem \ref{normality}

\noindent Using the Taylor expansion of $\widehat{F}_n(\cdot\mid x)$ around $q_\a(x)$ we get:
\begin{eqnarray}\label{Taylor}
\widehat{F}_n(q_{\a}(x) \mid x) - F(q_{\a}(x) \mid x) = \left( q_\a(x) - \widehat{q}_{n,\a}(x)\right) \hat{f}_n\left(q^\star_{n,\a}(x)\mid x\right),
\end{eqnarray}
where $q^\star_{n,\a}(x)$ lies between $q_\a(x)$ and $\widehat{q}_{n,\a}(x)$. 

Then, by combining the consistency result given by Lemma \ref{convNR} and Proposition \ref{density}, we get
\begin{eqnarray}\label{Taylor2}
\widehat{q}_{n,\a}(x) - q_\a(x) &=&  -\frac{\widehat{F}_n(q_{\a}(x) \mid x) - F(q_{\a}(x) \mid x)}{f\left(q_{\a}(x)\mid x\right)}
\end{eqnarray}
Finally, the combination of equation (\ref{Taylor2}) and Theorem \ref{normF} allows us to finish the proof of Theorem \ref{normality}.
\end{proof}

\begin{proof}{\it of Corollary \ref{corr}}

First, observe that 
\begin{eqnarray*}
& &\frac{M_{1,n} \;\hat{f}_n(\widehat{q}_{\a,n}(x)\mid x)}{\sqrt{M_{2,n}}} \sqrt{\frac{n F_{x,n}(h_K)}{\a \left( \bar{G}^{-1}_n(\widehat{q}_{\a,n}(x)) - \a\right)}} \; \left(\widehat{q}_{n,\a}( x) - q_{\a}(x)\right)  = \\
& &\frac{M_{1,n}\sqrt{M_2}}{M_1\sqrt{M_{2,n}}} \sqrt{\frac{nF_{x,n}(h_K) (\bar{G}^{-1}(q_\a(x))-\a)}{(\bar{G}_n^{-1}(\widehat{q}_{n,\a}(x))-\a)f_1(x) n\phi(h_K)}}\frac{\hat{f}_n(\widehat{q}_{n,\a}(x)\mid x)}{f(q_\a(x)\mid x)}\times \\
& & \frac{M_1}{\sqrt{M_2}}\sqrt{\frac{n\phi(h)f_1(x)}{\a (\bar{G}^{-1}(q_\a(x))-\a)}} f(q_\a(x)\mid x) \left( \widehat{q}_{n,\a}(x) - q_\a(x)\right)
\end{eqnarray*}

We have form Theorem \ref{normality} 
$$
 \frac{M_1}{\sqrt{M_2}}\sqrt{\frac{n\phi(h)f_1(x)}{\a (\bar{G}^{-1}(q_\a(x))-\a)}} f(q_\a(x)\mid x) \left( \widehat{q}_{n,\a}(x) - q_\a(x)\right) \stackrel{\mathcal{D}}{\longrightarrow} \mathcal{N}(0,1).
$$
Using results given by \cite{laib2010}, we have $M_{1,n} \stackrel{\mathbb{P}}{\longrightarrow} M_1$, $M_{2,n} \stackrel{\mathbb{P}}{\longrightarrow} M_2$ and $\frac{F_{x,n}(h_K)}{\phi(h_K)f_1(x)} \stackrel{\mathbb{P}}{\longrightarrow} 1$ as $n \rightarrow \infty.$

If in addition, we consider Proposition \ref{density}, Lemma \ref{convNR}, the consistency of $\bar{G}_n^{-1}(\cdot)$ to $\bar{G}^{-1}(\cdot)$ (given in \cite{deheuvels} ), one gets
$$
\frac{M_{1,n}\sqrt{M_2}}{M_1\sqrt{M_{2,n}}} \sqrt{\frac{nF_{x,n}(h_K) (\bar{G}^{-1}(q_\a(x))-\a)}{(\bar{G}_n^{-1}(\widehat{q}_{n,\a}(x))-\a)f_1(x) n\phi(h_K)}}\frac{\hat{f}_n(\widehat{q}_{n,\a}(x)\mid x)}{f(q_\a(x)\mid x)} \stackrel{\mathbb{P}}{\longrightarrow} 1, \quad \mbox{as}\; n\rightarrow\infty. 
$$ 
Therefore, the proof of Corollary \ref{corr} is achieved.
\end{proof}

\section*{Appendix }
\subsection*{Intermediate results for strong consistency}
\begin{proof}{\it of Lemma \ref{diff}}

Define the {``\it pseudo-conditional bias"} of the conditional distribution function estimate of $Y_i$ given $X=x$ as 
$$
B_n(x,t) = \frac{\overline{\widetilde{F}}_n(x,t)}{\overline{\ell}_n(x)} - F(t\mid x).
$$
Consider now the following quantites
$$
R_n(x,t) = -B_n(x,t) (\ell_n(x) - \overline{\ell}_n(x)),
$$
and
$$
Q_n(x,t) = (\widetilde{F}_n(x,t)- \overline{\widetilde{F}}_n(x,t)) - F(t\mid x) (\ell_n(x) - \overline{\ell}_n(x)).
$$
It is then clear that the following decomposition holds
\begin{eqnarray}\label{decomF}
\widetilde{F}_n(t \mid x) - F(t \mid x) = B_n(x,t) + \frac{R_n(x,t) + Q_n(x,t)}{\ell_n(x)}.
\end{eqnarray}

\begin{remark}\label{remarkQ}
Using statement (\ref{Q}) and Lemma \ref{lem2}, one can easily get, for all $x\in E$,
$$
\sup_{t\in \S}|Q_n(x,t)| = O_{a.s.}\left( \sqrt{\frac{\log n}{n\phi(h_K)}}\right).
$$ 
\end{remark}
Finally, the combination of results given in Lemma \ref{biais} and Remark \ref{remarkQ} achieves the proof of Lemma \ref{diff}.

\end{proof}

\begin{lemma}\label{biais}
Assume that hypothesis (A1)-(A5), (A7) and the condition (\ref{cond})
\begin{eqnarray}\label{biaiscond}
\sup_{t\in\S}|B_n(x,t)| = O_{a.s.}\left(h_K^\beta + h_H^\nu\right)  
\end{eqnarray}
\begin{eqnarray}\label{R}
\sup_{t\in\S}|R_n(x,t)| = O_{a.s.}\left((h_K^\beta+h_H^\nu)\left(\frac{\log n}{n \phi(h_K)}\right)^{1/2}\right).
\end{eqnarray}
\end{lemma}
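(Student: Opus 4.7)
The plan is to control $B_n(x,t)$ by evaluating the conditional expectations defining $\overline{\widetilde F}_n(x,t)$, reducing the censored summand to an uncensored quantity, and then running a standard kernel smoothing bias analysis uniformly in $t\in S$. First, since $\Delta_i(x)$ is $\G_{i-1}$-measurable, the tower property gives
\[
\E\!\left[\delta_i\bar G^{-1}(Y_i)H(h_H^{-1}(t-Y_i))\Delta_i(x)\mid \F_{i-1}\right]
=\E\!\left[\Delta_i(x)\,\E\!\left[\delta_i\bar G^{-1}(Y_i)H(h_H^{-1}(t-Y_i))\mid \G_{i-1}\right]\mid\F_{i-1}\right].
\]
On $\{\delta_i=1\}$ we have $Y_i=T_i$, and by (A7) the censoring variable $C_i$ is independent of $(T_i,\G_{i-1})$, so $\E[\1_{\{T_i\le C_i\}}\mid T_i,\G_{i-1}]=\bar G(T_i)$. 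This cancels the $\bar G^{-1}(T_i)$ factor and, combined with (A4), yields the clean identity
\[
\E\!\left[\delta_i\bar G^{-1}(Y_i)H(h_H^{-1}(t-Y_i))\mid \G_{i-1}\right]=\E[H(h_H^{-1}(t-T_i))\mid X_i]=:\Psi_t(X_i).
\]

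Next, integration by parts (the boundary terms vanish because $H$ and $F(\cdot\mid u)$ are CDFs) followed by the substitution $v=h_H^{-1}(t-s)$ would give $\Psi_t(u)=\int F(t-h_Hv\mid u)H^{(1)}(v)\,dv$. Applying the $t$-H\"older part of (A3)(ii) together with (A5), I obtain $\sup_{t\in S}|\Psi_t(u)-F(t\mid u)|=O(h_H^\nu)$ uniformly in $u\in V(x)$. On the support of $\Delta_i(x)$ we have $X_i\in B(x,h_K)\subset V(x)$ for $h_K$ small, so the $x$-H\"older part of (A3)(ii) gives $|F(t\mid X_i)-F(t\mid x)|\le C_xh_K^\beta$ uniformly in $t\in S$. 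Substituting back, summing over $i$, and dividing by $n\E(\Delta_1(x))$ produces $\overline{\widetilde F}_n(x,t)=F(t\mid x)\overline\ell_n(x)+O(h_K^\beta+h_H^\nu)\,\overline\ell_n(x)$ uniformly in $t$. Dividing by $\overline\ell_n(x)$, which tends a.s.\ to $1$ by Lemma \ref{lem2}(ii), delivers the bound (\ref{biaiscond}).

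For (\ref{R}) I would simply combine the definition $R_n(x,t)=-B_n(x,t)(\ell_n(x)-\overline\ell_n(x))$ with the rate just obtained for $B_n$ and with Lemma \ref{lem2}(i), which supplies the rate $\sqrt{\log n/(n\phi(h_K))}$ for $\ell_n-\overline\ell_n$. The main obstacle is the cancellation step: eliminating the Kaplan--Meier-type factor $\bar G^{-1}(Y_i)$ from the conditional expectation given $\G_{i-1}$ requires the simultaneous use of (A4), (A7), and the explicit structure of $(Y_i,\delta_i)$ inherited from the censoring model. Once this reduction is done, what remains is a classical kernel bias calculation, whose only subtlety is keeping the H\"older control from (A3)(ii) genuinely uniform across the compact quantile window $S$ (handling the tails in $v$ via the boundedness of $F$ and the integrability condition $\int|v|^\nu H^{(1)}(v)\,dv<\infty$ from (A5)).
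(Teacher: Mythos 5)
Your argument is correct and follows essentially the same route as the paper: double conditioning with respect to $\G_{i-1}$ (and $T_i$), cancellation of the $\bar G^{-1}$ factor via (A7) and (A4), the H\"older/kernel bias bound from (A3)(ii) and (A5) to get $O_{a.s.}(h_K^\beta+h_H^\nu)$ times $\overline{\ell}_n(x)$, and then Lemma \ref{lem2} both to normalize $\overline{\ell}_n(x)$ and to deduce the rate for $R_n(x,t)$ from its defining product. The only cosmetic difference is that you separate the $h_H^\nu$ and $h_K^\beta$ contributions into two steps, while the paper bounds them jointly in a single inequality.
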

\begin{proof}{\it of Lemma \ref{biais}}

Recall that

$$
B_n(x,t) = \frac{\overline{\widetilde{F}}_n(x,t)}{\overline{\ell}_n(x)} - F(t\mid x) = \frac{\overline{\widetilde{F}}_n(x,t) - \overline{\ell}_n(x) F(t\mid x)}{\overline{\ell}_n(x)}.
$$

\noindent By double conditioning with respect to the $\sigma$-field $\G_{i-1}$ and $T_i$ and using assumption (A4) and the fact that $\1_{\{ T_i \leq C_i\}} \varphi(Y_i) = \1_{\{ T_i \leq C_i\}} \varphi(T_i)$, we get
\begin{eqnarray*}
\overline{\widetilde{F}}_n(x,t) &=& \frac{1}{n\E(\Delta_1(x))} \sum_{i=1}^n \E\left\{ \Delta_i(x) \E\left[ \delta_i \bar{G}^{-1}(Y_i)\; H(h_{H}^{-1}(t-Y_i)) \mid \G_{i-1}, T_i \right] \mid \F_{i-1}\right\}\\
&=&  \frac{1}{n\E(\Delta_1(x))} \sum_{i=1}^n \E\left\{  \Delta_i(x) \E\left[ \delta_i \bar{G}^{-1}(Y_i)\; H(h_{H}^{-1}(t-Y_i))\mid X_i, T_i \right] \mid \F_{i-1}\right\}\\
&=& \frac{1}{n\E(\Delta_1(x))} \sum_{i=1}^n  \E\left\{ \bar{G}^{-1}(T_i)\; H(h_{H}^{-1}(t-T_i)) \Delta_i(x) \E\left[ \1_{\{ T_i \leq C_i\}} \mid X_i, T_i\right]\mid \F_{i-1}\right\}\\
&=&  \frac{1}{n\E(\Delta_1(x))} \sum_{i=1}^n  \E\left\{  \Delta_i(x)\; H(h_{H}^{-1}(t-T_i)) \mid \F_{i-1}\right\}\\
\end{eqnarray*}
Then, by a double conditioning with respect to $\G_{i-1}$, we have
\begin{eqnarray*}
\overline{\widetilde{F}}_n(x,t) - \overline{\ell}_n(x) F(t\mid x) &=& \frac{1}{n\E(\Delta_1(x))} \sum_{i=1}^n \E\left\{ \Delta_i(x) [\E(H(h_{H}^{-1}(t-T_i)) \mid X_i) - F(t\mid x)] \mid \F_{i-1}\right\}\\
\end{eqnarray*}
Now, because of conditions (A3) and (A5), we get 
\begin{eqnarray}\label{star}
\Big|\E(H(h_{H}^{-1}(t-T_i)) \mid X_i) - F(t\mid x)\Big| \leq C_x \int_\mathbb{R} H^{(1)}(u) \left( h_K^{\beta} + |u|^\nu h_H^\nu\right) du. 
\end{eqnarray}
Therefore, we obtain 
\begin{eqnarray*}
\overline{\widetilde{F}}_n(x,t) - \overline{\ell}_n(x) F(t\mid x) &=& O_{a.s.}\left(h_K^\beta + h_H^\nu\right) \times \frac{1}{n\E(\Delta_1(x))} \sum_{i=1}^n \E\left\{\Delta_i(x) \mid \F_{i-1} \right\}\\
&=& O_{a.s.}\left(h_K^\beta + h_H^\nu\right) \times \overline{\ell}_n(x).
\end{eqnarray*}
Similarly as in Lemma \ref{lem2}, it is easily seen that $\overline{\ell}_n(x) = O_{a.s.}(1)$. Thus, we obtain $\overline{\widetilde{F}}_n(x,t) - \overline{\ell}_n(x) F(t\mid x) = O_{a.s.}\left(h_K^\beta + h_H^\nu\right).$

\noindent The second part of Lemma \ref{biais} follows easily from the fact that $R_n(x,t) = -B_n(x,t) (\ell_n(x) - \overline{\ell}_n(x))$, the statement of (\ref{biaiscond}) and Lemma \ref{lem2}, we get
$$
\sup_{t\in\S}|R_n(x,t)| = O_{a.s.}\left( (h_K^\beta+h_H^\nu) \left( \frac{\log n}{n\phi(h_K)}\right)^{1/2}\right).
$$
\end{proof}

\begin{lemma}\label{Qbis}
Assume that (A1)-(A2) and (A4)-(A7) are satisfied. Then, for any $x \in E$, we have
\begin{eqnarray}\label{Q}
\sup_{t\in\S}|\widetilde{F}_n(x,t)- \overline{\widetilde{F}}_n(x,t)| = O_{a.s.}\left( \left( \frac{\log n}{n\phi(h_K)}\right)^{1/2}\right).
\end{eqnarray}
\end{lemma}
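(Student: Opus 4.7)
The plan is to recognise the difference as a normalised sum of martingale differences and then invoke the Bernstein-type inequality of Lemma \ref{fact1}, upgrading to the supremum by a covering argument over the compact set $\S$. More precisely, I would write
\[
\widetilde F_n(x,t) - \overline{\widetilde F}_n(x,t) = \sum_{i=1}^n Z_{n,i}(t),\qquad
Z_{n,i}(t) = \frac{\xi_i(t) - \E[\xi_i(t)\mid \F_{i-1}]}{n\,\E[\Delta_1(x)]},
\]
with $\xi_i(t):= \delta_i \bar G^{-1}(Y_i) H(h_H^{-1}(t-Y_i))\Delta_i(x)$. For each fixed $t$, $(Z_{n,i}(t))_i$ is a triangular martingale difference array with respect to $(\F_i)$, which is exactly the framework in which Lemma \ref{fact1} applies.

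To apply that lemma, the $\F_{i-1}$-conditional moments must be cast in the form $C^{p-2}p!\,d_n^2$. Using the boundedness of $K$ from (A1) and of $H$ (a c.d.f.), pulling $\Delta_i^p(x)$ out and conditioning on $X_i$ via (A4) together with the identity $\delta_i \bar G^{-1}(Y_i)=\delta_i \bar G^{-1}(T_i)$ and (A7), the conditional moment reduces to a bound of the form $\text{const}\cdot\E[\Delta_i^p(x)\mid \F_{i-1}]$, where the constant is a uniform bound on $\sup_{t\in\S,\,x'}\E[\bar G^{1-p}(T_i) H^p(h_H^{-1}(t-T_i))\mid X_i=x']$. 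This last supremum is finite uniformly in $p$ because $\S\subset(-\infty,\tau]$ with $\tau<\tau_G$, while $H(h_H^{-1}(t-\cdot))$ effectively truncates the integration to a set on which $\bar G^{-1}$ is bounded; continuity via (A6) handles the neighbourhood in $x'$. Combining this with Lemma \ref{lem1}(i)--(ii), namely $\E[\Delta_i^p(x)\mid \F_{i-1}]=O_{a.s.}(\phi(h_K))$ and $\E[\Delta_1(x)]\asymp \phi(h_K)$, gives
\[
\E[|Z_{n,i}(t)|^p\mid \F_{i-1}]\le C^{p-2}p!\,d_n^2,\qquad C=\frac{c_1}{n\phi(h_K)},\quad d_n^2=\frac{c_2}{n^2\phi(h_K)},
\]
so that $D_n=n\,d_n^2=c_2/(n\phi(h_K))$.

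With the moment control in hand I would apply Lemma \ref{fact1} at fixed $t$ with $\epsilon=\epsilon_0\sqrt{\log n/(n\phi(h_K))}$. Condition (\ref{cond}) ensures $C\epsilon=o(D_n)$, and the exponent reduces to $-c_3\epsilon_0^2\log n$, which yields a probability bound summable in $n$ once $\epsilon_0$ is chosen large enough. Borel--Cantelli then delivers the pointwise rate $|\widetilde F_n(x,t)-\overline{\widetilde F}_n(x,t)|=O_{a.s.}(\sqrt{\log n/(n\phi(h_K))})$.

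Finally, uniformity in $t\in\S$ is produced by a standard covering argument. Cover $\S$ by $N_n$ points with mesh $\eta_n:= h_H\sqrt{\log n/(n\phi(h_K))}$. Assumption (A5) makes $H$ Lipschitz with constant $\|H^{(1)}\|_\infty$, so the oscillation of both $\widetilde F_n(x,\cdot)$ and $\overline{\widetilde F}_n(x,\cdot)$ on a single cell is bounded by a constant multiple of $h_H^{-1}\eta_n\cdot \ell_n(x)\cdot \bar G^{-1}(\tau)$, which is of the target order by Lemma \ref{lem2}(ii). Since $N_n=O(h_H^{-1}\sqrt{n\phi(h_K)/\log n})$ is polynomial in $n$, a union bound over the grid costs only a factor of $\log n$ in the exponent, absorbed by enlarging $\epsilon_0$. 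The main obstacle, as I see it, is precisely the moment-bookkeeping in the second paragraph: extracting constants $c_1,c_2$ that are genuinely uniform in $p,i,t$ requires juggling the unboundedness of $\bar G^{-1}$ near $\tau_G$, the effective truncation provided by $H(h_H^{-1}(t-\cdot))$, and the ergodic moment identities of Lemma \ref{lem1} simultaneously. Everything else is routine chaining plus Bernstein.
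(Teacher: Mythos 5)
Your proposal follows essentially the same route as the paper's own proof: the identical martingale-difference decomposition, the same moment bookkeeping via (A4), (A6), (A7) and Lemma \ref{lem1} to verify the hypothesis of the exponential inequality in Lemma \ref{fact1}, and the same choice $\epsilon_0\sqrt{\log n/(n\phi(h_K))}$ followed by Borel--Cantelli. The only difference is that you add an explicit covering argument over $t\in\S$ (using the Lipschitz property of $H$ from (A5) and $\ell_n(x)\to 1$) to obtain the supremum, a step the paper's proof treats only pointwise in $t$, so your write-up is, if anything, slightly more complete on that point.
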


\begin{proof}{\it of Lemma \ref{Qbis}}

Observe that
\begin{eqnarray*}
\widetilde{F}_n(x,t)- \overline{\widetilde{F}}_n(x,t) = \frac{1}{n\E(\Delta_1(x))} \sum_{i=1}^n L_{i,n}(x,t),
\end{eqnarray*}

where $L_{i,n}(x,t) = \delta_i \bar{G}^{-1}(Y_i) H(h_{H}^{-1}(t-Y_i)) \Delta_i(x) - \E\left(\delta_i \bar{G}^{-1}(Y_i) H(h_{H}^{-1}(t-Y_i)) \Delta_i(x)\mid \F_{i-1} \right)$ is a martingale difference. Therefore, we can use Lemma \ref{fact1} to obtain an exponential upper bound relative to the quantity $\widetilde{F}_n(x,t)- \overline{\widetilde{F}}_n(x,t).$ Let us now check the conditions under which one can obtain the mentioned exponential upper bound. In this respect, for any $p\in \mathbb{N}-\{ 0\}$, observe that
$$
L_{n,i}^p(x,t) = \sum_{k=0}^p C_p^k \left(\frac{\delta_i}{\bar{G}(Y_i)}\; H(h_{H}^{-1}(t-Y_i)) \Delta_i(x) \right)^k (-1)^{p-k} \left[\E\left(\frac{\delta_i}{\bar{G}(Y_i)}\; H(h_{H}^{-1}(t-Y_i)) \Delta_i(x)\mid \F_{i-1} \right) \right]^{p-k}
$$
In view of condition (A4), $\left[\E\left(\delta_i \bar{G}^{-1}(Y_i)\; H(h_{H}^{-1}(t-Y_i)) \Delta_i(x)\mid \F_{i-1} \right) \right]^{p-k}$ is $\F_{i-1}$-measurable, it follows then that
\begin{eqnarray*}
\E(L_{i,n}^p(x,t) \mid \F_{i-1}) = \sum_{k=0}^p C_p^k \E\left[\left(\delta_i \bar{G}^{-1}(Y_i)\; H(h_{H}^{-1}(t-Y_i)) \Delta_i(x) \right)^k \mid \F_{i-1}\right] (-1)^{p-k}\times\\ \left[\E\left(\delta_i \bar{G}^{-1}(Y_i)\; H(h_{H}^{-1}(t-Y_i)) \Delta_i(x)\mid \F_{i-1} \right) \right]^{p-k}.
\end{eqnarray*}
Thus,
\begin{eqnarray*}
\Big|\E(L_{i,n}^p(x,t) \mid \F_{i-1}) \Big| &\leq& \sum_{k=0}^p C_p^k \E\left[\left|\delta_i \bar{G}^{-1}(Y_i)\; H(h_{H}^{-1}(t-Y_i)) \Delta_i(x) \right|^k \mid \F_{i-1}\right] \times\\ 
& &\left[\E\left(\left|\delta_i \bar{G}^{-1}(Y_i)\; H(h_{H}^{-1}(t-Y_i)) \Delta_i(x) \right| \mid \F_{i-1} \right) \right]^{p-k}.
\end{eqnarray*}
Making use of Jensen inequality, one can write

\begin{eqnarray*}
\E\left[\left|\delta_i \bar{G}^{-1}(Y_i)\; H(h_{H}^{-1}(t-Y_i)) \Delta_i(x) \right|^k \mid \F_{i-1}\right]  \left[\E\left(\left|\delta_i \bar{G}^{-1}(Y_i)\; H(h_{H}^{-1}(t-Y_i)) \Delta_i(x) \right| \mid \F_{i-1} \right) \right]^{p-k} &\leq&\\
 \E\left(\left|\delta_i \bar{G}^{-1}(Y_i)\; H(h_{H}^{-1}(t-Y_i)) \Delta_i(x) \right|^k \mid \F_{i-1}\right)   \E\left(\left|\delta_i \bar{G}^{-1}(Y_i)\; H(h_{H}^{-1}(t-Y_i)) \Delta_i(x) \right|^{p-k} \mid \F_{i-1} \right)  & &
\end{eqnarray*}
Observe now that for any $m\geq 2$
\begin{eqnarray*}
\E\left(\left|\delta_i \bar{G}^{-1}(Y_i)\; H(h_{H}^{-1}(t-Y_i)) \Delta_i(x) \right|^m \mid \F_{i-1} \right) &\leq& \frac{1}{(\bar{G}(\tau))^{m-1}}\E\left( H^m(h_{H}^{-1}(t-Y_i)) \Delta_i^m(x)  \mid \F_{i-1} \right) \\
&\leq&  \frac{1}{(\bar{G}(\tau))^{m-1}} \E\left( \Delta_i^m(x) g_{m}(X_i,t) \mid \F_{i-1} \right).
\end{eqnarray*}
In view of assumption (A6), we have
\begin{eqnarray*}
\E\left(\left|\frac{\delta_i}{\bar{G}(Y_i)}  H(h_{H}^{-1}(t-Y_i)) \Delta_i(x) \right|^m \mid \F_{i-1} \right) & \leq & \frac{1}{(\bar{G}(\tau))^{m-1}}\left\{ \E\left( \Delta_i^m(x) |g_{m}(X_i,t)-g_{m}(x,t)| \mid \F_{i-1} \right) +\right.\\
& &\left. g_{m}(x,t) \E[\Delta_i^m(x) \mid \F_{i-1}]\right\}\\
&\leq &  \frac{1}{(\bar{G}(\tau))^{m-1}} \E[\Delta_i^m(x) \mid \F_{i-1}] \left[ \sup_{x^\prime\in B(x,h)} |g_{m}(x^\prime,t)-g_{m}(x,t)| \right.\\
& & \left.+ g_{m}(x,t)\right]\\
&\leq &  \frac{C_0}{(\bar{G}(\tau))^{m-1}} \E[\Delta_i^m(x) \mid \F_{i-1}],
\end{eqnarray*}
where $C_0$ is a positive constant.  By using Lemma \ref{lem1}, conditions (A2)(ii) and (A2)(iii),  whenever the kernel $K$ and the function $\tau_0$ are bounded by constants $a_1$ and $c_1$ respectively, we get, for $m=k$,
$$
\E[\Delta_i^k(x) \mid \F_{i-1}] = \phi(h_K) M_k f_{i,1}(x) + O_{a.s.}(g_{i,x}(h_K)) = c_1 \phi(h_K) a_1^k f_{i,1}(x) + O_{a.s.}(g_{i,x}(h_K)).
$$
Similarly, with $m=p-k$, we get
$$
\E\left(\left|\delta_i \bar{G}^{-1}(Y_i)\; H(h_{H}^{-1}(t-Y_i)) \Delta_i(x) \right|^{p-k} \mid \F_{i-1} \right) = c_1 \phi(h_K) a_1^{p-k} f_{i,1}(x) + O_{a.s.}(g_{i,x}(h_K)).
$$
Therefore,
\begin{eqnarray*}
\E\left(\left|\delta_i \bar{G}^{-1}(Y_i)\; H(h_{H}^{-1}(t-Y_i)) \Delta_i(x) \right|^k \mid \F_{i-1}\right)   \E\left(\left|\delta_i \bar{G}^{-1}(Y_i)\; H(h_{H}^{-1}(t-Y_i)) \Delta_i(x) \right|^{p-k} \mid \F_{i-1} \right) =\\
 c_1^2 a_1^p \phi(h_K)^2 f_{i,1}^2(x) + O_{a.s.}(g_{i,x}(h_K))\phi(h_K) f_{i,1}(x)(a_1^k + a_1^{p-k}) + O_{a.s.}(g^2_{i,x}(h_K)).
\end{eqnarray*}
Since $f_{i,1}(x)$ is almost surely bounded by a deterministic quantity $b_i(x)$, $g_{i,x}(h_K) \leq \phi(h_K)$ almost surely and $\phi(h_K)^2 < \phi(h_K)$, for $n$ sufficiently large, then following the same arguments as in the proof of Lemma 5 in \cite{laib2011}, one may write almost surely,
$$
\Big|\E(L_{i,n}^p(x,t) \mid \F_{i-1}) \Big| = p! C^{p-2}[C_2 \phi(h_K) f_{i,1}(x) + O_{a.s.}(g_{i,x}(h_K))] \leq p! C^{p-2} \phi(h_K) [M b_i(x) +1],
$$
where $C=2\max(1, a_1^2)$ and $C_2$ a positive constant. By taking $d_i^2 = \phi(h_K) [M b_i(x) +1]$, then $D_n=\sum_{i=1}^n d_i^2$ and by assumptions (A2)(ii) and (A2)(v) one gets $n^{-1}D_n = \phi(h_K) [M D(x) + o_{a.s.}(1)]$ as $n\rightarrow\infty,$ we now use the Lemma \ref{fact1} with $D_n=O_{a.s.}(n\phi(h_K))$ and $S_n=\sum_{i=1}^n L_{n,i}(x,t).$ Thus, for any $\epsilon_0 >0$, we can easily get
\begin{eqnarray*}
\P\left( \Big| \widetilde{F}_n(x,t)- \overline{\widetilde{F}}_n(x,t) \Big|> \epsilon_0 \sqrt{\frac{\log n}{n\phi(h_K)}} \right) &=& \P\left( \Big| \sum_{i=1}^n L_{n,i}(x,t)\Big| > n\E(\Delta_1(x))\epsilon_0 \sqrt{\frac{\log n}{n\phi(h_K)}}\right)\\
&\leq&2\exp\left\{ -\frac{(n\E(\Delta_1(x))\epsilon_0)^2\frac{\log n}{n\phi(h_K)}}{2D_n + 2Cn\E(\Delta_1(x))\epsilon_0\sqrt{\frac{\log n}{n\phi(h_K)}}}\right\}\\
&\leq& 2\exp\left\{ -C_1\epsilon_0^2\log n\right\} = \frac{2}{n^{C_1\epsilon_0^2}},
\end{eqnarray*}
where $C_1$ is a positive constant. Therefore, choosing $\epsilon_0$ large enough, we obtain
$$
\sum_{n\geq 1}\P\left( \Big| \widetilde{F}_n(x,t)- \overline{\widetilde{F}}_n(x,t) \Big|> \epsilon_0 \sqrt{\frac{\log n}{n\phi(h_K)}}\right) <\infty.
$$
Finally, we achieve the proof by Borel-Cantelli Lemma.
\end{proof}

\begin{proof}{\it of Lemma \ref{equiv} }

\noindent From (\ref{pseudo}) and (\ref{estF2}) we have

\begin{eqnarray*}
 \Bigl\lvert \widehat{F}_n(t \mid x) - \widetilde{F}_n(t \mid x)\Bigl\lvert &\leq&\frac{1}{n\mathbb{E} \left[\Delta_1 (x)\right] \ell_n(x)}
\sum_{i=1}^n\left|\delta_i\Delta_i (x)\; \; H(h_{H}^{-1}(t-Y_i))\left(\frac{1}{\bar{G}(Y_i)}-\frac{1}{\bar{G}_n (Y_i)}\right)\right|\\
&\leq& \frac{\sup_{t\in S}|\bar{G_n}(t)-\bar{G}(t)|}{\bar{G}_n(\tau)}\; \widetilde{F}_n(t\mid x)\\
\end{eqnarray*}

\noindent Since $\bar{G}(\tau)> 0,$ in conjuction with the Srong Law of Large Numbers (SLLN) and the Law of the Iterated Logarithm (LIL) on the censoring law (see Theorem 3.2 of \cite{cai}, the result is an immediate consequence of Lemmas \ref{diff}.

\end{proof}

\subsection*{Intermediate results for asymptotic normality }

\begin{proof} of Lemma \ref{normalityQ}

\noindent Let us denote by 
\begin{eqnarray}\label{eta}
\eta_{ni} = \left( \frac{\phi(h_K)}{n}\right)^{1/2} \left( \frac{\delta_i}{\bar{G}(Y_i)} H(h_H^{-1}(t - Y_i)) - F(t\mid x)\right)\frac{\Delta_i(x)}{\E(\Delta_1(x))},
\end{eqnarray}
and define $\xi_{ni} := \eta_{ni} - \E[\eta_{ni} \mid \F_{i-1}].$ It is easy seen that
 \begin{eqnarray}\label{diffmart}
 (n\phi(h_K))^{1/2} Q_n(x,t) = \sum_{i=1}^n \xi_{ni},
 \end{eqnarray}

where, for any fixed $x\in E$, the summands in (\ref{diffmart}) from a triangular array of stationary martingal differences with respect to the $\sigma$-field $\F_{i-1}$. This allows us to apply the Central Limit Theorem for discrete-time arrays of real-valued martingales (see, \cite{hall}, page 23) to establish the asymptotic normality of $Q_n(x,t)$. Therefore, we have to establish the following statements:
\begin{itemize}
\item[(a)] $\sum_{i=1}^n \E[\xi_{ni}^2 \mid \F_{i-1}] \stackrel{\mathcal{\mathbb{P}}}{\longrightarrow} \sigma^2(x,t),$
\item[(b)] $n \E[\xi_{ni}^2 \; \1_{[|\xi_{ni}| > \epsilon]}] = o(1)$ holds for any $\epsilon >0$ (Lindeberg condition).
\end{itemize}
\subsubsection*{Proof of part (a).} Observe that
\begin{eqnarray*}
\Big| \sum_{i=1}^n \E[\eta_{ni}^2 \mid \F_{i-1}] - \sum_{i=1}^n \E[\xi_{ni}^2 \mid \F_{i-1}]\Big| \leq \sum_{i=1}^n (\E[\eta_{ni} \mid \F_{i-1}])^2.
\end{eqnarray*}
Using Lemma \ref{lem1} and inequality (\ref{star}), we obtain
\begin{eqnarray}
\Big| \E[\eta_{ni} \mid \F_{i-1}] \Big| &=& \frac{1}{\E(\Delta_1(x))} \left( \frac{\phi(h_K)}{n}\right)^{1/2} \Big| \E\left[ \Delta_i(x)\left( \frac{\delta_i}{\bar{G}(Y_i)} H(h_H^{-1}(t - Y_i))- F(t\mid x)\right)\mid \F_{i-1}\right] \Big|\nonumber\\
&=& O_{a.s.}\left(h_K^\beta + h_H^\nu\right) \left( \frac{\phi(h_K)}{n}\right)^{1/2} \left( \frac{f_{i,1}(x)}{f_1(x)} + O_{a.s.}\left( \frac{g_{i,x}(h_K)}{\phi(h_K)}\right)\right) 
\end{eqnarray}
Then, by (A2)(ii)-(iii), we get
\begin{eqnarray*}
\sum_{i=1}^n (\E[\eta_{ni} \mid \F_{i-1}])^2 = O_{a.s.}\left(\left(h_K^\beta + h_H^\nu\right)^2 \phi(h_K)\right).
\end{eqnarray*}
The statement of (a) follows then if we show that
\begin{eqnarray}\label{proba}
\lim_{n\rightarrow \infty}\sum_{i=1}^n \E[\eta_{ni}^2\mid \F_{i-1}] \stackrel{\mathcal{\mathbb{P}}}{\longrightarrow} \sigma^2(x, t).
\end{eqnarray}
To prove (\ref{proba}), observe that, using assumption (A8), we have
\begin{eqnarray*}
\sum_{i=1}^n \E[\eta_{ni}^2\mid \F_{i-1}] &=& \frac{\phi(h_K)}{n(\E(\Delta_1(x)))^2} \sum_{i=1}^n \E\left\{\Delta_i^2(x) \left( \frac{\delta_i}{\bar{G}(Y_i)} H(h_H^{-1}(t - Y_i)) - F(t \mid x)\right)^2\mid \F_{i-1}\right\} \\
&=&  \frac{\phi(h_K)}{n(\E(\Delta_1(x)))^2} \sum_{i=1}^n \E\left\{\Delta_i^2(x) \E\left[ \left(\frac{\delta_i}{\bar{G}(Y_i)} H(h_H^{-1}(t- Y_i)) - F(t \mid X_i) \right)^2 \mid X_i\right] \mid \F_{i-1}\right\}.
\end{eqnarray*}

\noindent Using the definition of the conditional variance, we have
\begin{eqnarray}
\E\left[ \left(\frac{\delta_i}{\bar{G}(Y_i)} H(h_H^{-1}(t- Y_i)) - F(t \mid X_i) \right)^2 \mid X_i\right] &=& \mbox{var}\left[\frac{\delta_i}{\bar{G}(Y_i)} H(h_H^{-1}(t- Y_i))  \mid X_i\right] + \nonumber\\
& & \left[ \E\left( \frac{\delta_i}{\bar{G}(Y_i)} H(h_H^{-1}(t- Y_i)) \mid X_i\right) - F(t\mid x)\right]^2 \nonumber\\
&=:& K_{n1} + K_{n2}
\end{eqnarray}

By the use of a double conditioning with respect to $T_i$, inequality (\ref{star}), assumption (A3) and Lemma \ref{lem1}, we can easily get
\begin{eqnarray}\label{termK2}
 \frac{\phi(h_K)}{n(\E(\Delta_1(x)))^2} \sum_{i=1}^n \E\left\{\Delta_i^2(x) K_{n2}\mid \F_{i-1}\right\} = O_{a.s.}\left((h_K^\beta + h_H^\nu)^2 \right) \left[ \frac{M_2}{M_1^2}\frac{1}{f_1(x)} + o_{a.s.}(1)\right].
\end{eqnarray}
Let us now examine the term $K_{n1}$,
\begin{eqnarray*}
K_{n1} &=& \E\left[ \frac{\delta_i}{\bar{G}^2(Y_i)} H^2\left(\frac{t-Y_i}{h_H}\right) \mid X_i\right] - \left[ \E\left( \frac{\delta_i}{\bar{G}(Y_i)} H\left(\frac{t-Y_i}{h_H}\right)\mid X_i\right)\right]^2\\
&=& \i_1 + \i_2.
\end{eqnarray*}

\noindent The first term of the last equality can be developed as follow, 
\begin{eqnarray*}
\i_1 &=& \E\left[ H^2\left(\frac{t-Y_i}{h_H}\right) \frac{1}{\bar{G}(Y_i)} \mid X_i\right]\\
&=& \int_\mathbb{R} H^2\left(\frac{t-z}{h_H}\right) \frac{1}{\bar{G}(z)} f(z\mid X_i) dz\\
&=&  \int_\mathbb{R} H^2(v) \frac{1}{\bar{G}(t-h_Hv)} dF(t-h_Hv\mid X_i). 
\end{eqnarray*}
By the first order Taylor expansion of the function $\bar{G}^{-1}(\cdot)$ around zero one gets
\begin{eqnarray*}
\i_1 &:=& \int_\mathbb{R} H^2(v) \frac{1}{\bar{G}(t)} dF(t-h_Hv\mid X_i)+ \frac{h_H}{\bar{G}^2(t)}   \int_\mathbb{R} v H(v) \bar{G}^{(1)}(t^\star) dF(t-hv\mid X_i) + o(1)\\
&=:& \i^\prime_1 +  \i^\prime_2 ,
\end{eqnarray*}
where $t^\star$ is between $t$ and $t-h_Hv.$

\noindent Under assumption (A9), we have $\i^\prime_2 \leq h_H^2 \frac{\sup_{u\in \mathbb{R}}|G^{(1)}(u)|}{\bar{G}^2(t)}\int_\mathbb{R} v f(t-h_H v \mid X_i) dv$. Then, using assumption (A3), we get $\i^\prime_2  = O(h_H^2).$

On the other hand, by integrating by part we have
\begin{eqnarray*}
\i^\prime_1 &=& \frac{1}{\bar{G}(t)} \int_{\mathbb{R}} 2 H^\prime(v) H(v) F(t-h_Hv\mid X_i) dv\\
&=& \frac{1}{\bar{G}(t)} \int_{\mathbb{R}} 2 H^\prime(v) H(v) \left(F(t-h_Hv\mid X_i) - F(t\mid x)\right) dv +  \frac{1}{\bar{G}(t)} \int_{\mathbb{R}} 2 H^\prime(v) H(v)  F(t\mid x) dv. 
\end{eqnarray*}
Then, under assumption (A3), we get $\i^\prime_1 = \frac{F(t\mid x)}{\bar{G}(t)} + O(h_K^\beta + h_H^\nu)$ and therefore 
\begin{eqnarray}
\i_1 =  \frac{F(t\mid x)}{\bar{G}(t)} + O(h_K^\beta + h_H^\nu) + O(h_H^2).
\end{eqnarray}
Finally, we get
\begin{eqnarray*}
\frac{\phi(h_K)}{n(\E(\Delta_1(x)))^2} \sum_{i=1}^n \E \left \{\Delta_i^2(x) K_{n1} \mid \F_{i-1}\right\} &=& \left( O(h_K^\beta + h_H^\nu) + O(h_H^2) + \frac{F(t \mid x)}{\bar{G}(t)} - (F(t \mid x))^2 \right) \times\\
& & \frac{\phi(h_K)}{n(\E(\Delta_1(x)))^2} \sum_{i=1}^n \E\left(\Delta_i^2(x) \mid F_{i-1} \right).
\end{eqnarray*}
Then, $\lim_{n\rightarrow\infty} \frac{\phi(h_K)}{n(\E(\Delta_1(x)))^2} \sum_{i=1}^n \E \left \{\Delta_i^2(x) K_{n2} \mid \F_{i-1}\right\} =0$, almost surely and
\begin{eqnarray*}
\lim_{n\rightarrow\infty} \frac{\phi(h_K)}{n(\E(\Delta_1(x)))^2} \sum_{i=1}^n \E \left \{\Delta_i^2(x) K_{n1} \mid \F_{i-1}\right\} = \left(\frac{F(t \mid x)}{\bar{G}(t)} - (F(t \mid x))^2 \right) \times \left( \frac{M_2}{M_1^2}\frac{1}{f_1(x)}\right).
\end{eqnarray*}
Therefore,
\begin{eqnarray*}
\sum_{i=1}^n \E[\eta_{ni}^2\mid \F_{i-1}] =  \left(\frac{F(t \mid x)}{\bar{G}(t)} - (F(t \mid x))^2 \right) \times \left( \frac{M_2}{M_1^2}\frac{1}{f_1(x)}\right)=:\sigma(x,t).
\end{eqnarray*}


This is complete the Proof of part (a).
\subsubsection*{Proof of part (b).} The Lindeberg condition results from Corollary 9.5.2 in \cite{chow} which implies that $n\E[\xi_{ni}^2 \1_{[|\xi_{ni} > \epsilon|]}] \leq 4n\E[\eta_{ni}^2 \1_{[|\eta_{ni}| > \epsilon/2]}].$ Let $a>1$ and $b>1$ such that $1/a + 1/b =1$. Making use of H\"older and Markov inequalities one can write, for all $\epsilon > 0$,
\begin{eqnarray*}
\E[\eta_{ni}^2 \1_{[|\eta_{ni}| > \epsilon/2]}] \leq \frac{\E|\eta_{ni}|^{2a}}{(\epsilon/2)^{2a/b}}.
\end{eqnarray*}
Taking $C_0$ a positive constant and $2a =2+\delta$ (with $\delta$ as in (A8)), using the condition (A8) and a double conditioning, we obtain
\begin{eqnarray*}
4n\E[\eta_{ni}^2 \1_{[|\eta_{ni}| > \epsilon/2]}] &\leq& C_0 \left( \frac{\phi(h_K)}{n}\right)^{(2+\delta)/2} \frac{n}{(\E(\Delta_1(x)))^{2+\delta}} \times\\
& & \E\left(\left[\Big| \frac{\delta_i}{\bar{G}(Y_i)} H(h_H^{-1}(t - Y_i)) - F(t\mid x) \Big| \Delta_i(x) \right]^{2+\delta}\right)\\
&\leq & C_0 \left( \frac{\phi(h_K)}{n}\right)^{(2+\delta)/2} \frac{n}{(\E(\Delta_1(x)))^{2+\delta}} \E\left[ (\Delta_i(x))^{2+\delta} \overline{W}_{2+\delta}(t\mid X_i)\right]\\
&\leq & C_0  \left( \frac{\phi(h_K)}{n}\right)^{(2+\delta)/2} \frac{n\E[(\Delta_1(x))^{2+\delta}]}{(\E(\Delta_1(x)))^{2+\delta}}\;\; (|\overline{W}_{2+\delta}(t\mid x)| + o(1))
\end{eqnarray*}
Now, using Lemma \ref{lem1}, we get
\begin{eqnarray*}
4n\E[\eta_{ni}^2 \1_{[|\eta_{ni}| > \epsilon/2]}]  \leq C_0 (n\phi(h_K))^{-\delta/2} \frac{(M_{2+\delta})f_1(x) + o(1)}{(M_1^{2+\delta}f_1^{2+\delta}(x)) + o(1)} \; (|\overline{W}_{2+\delta}(t\mid x)| + o(1)) = O((n\phi(h_K))^{-\delta/2}).
\end{eqnarray*}
This completes the proof of part (b) and therefore the proof of Lemma \ref{normalityQ}.
\end{proof}
%
%

\end{document}